\let\le\leqslant
\let\ge\geqslant
\def\dsum_#1_#2{\sum_{{#1}\atop {#2}}}
\def\rar{\rightarrow}
\begin{document}

\title[Brauer relations in positive characteristic semisimplified.]
{Brauer relations for finite groups in the ring of semisimplified modular representations}

\author{Matthew Spencer}
\email{ matthew.james.spencer91@gmail.com}
\address{Mathematics Institute, Zeeman Building, University of Warwick,
	Coventry CV4 7AL, UK}
\llap{.\hskip 10cm} \vskip -0.8cm

\maketitle

\begin{abstract}
Let $G$ be a finite group and $p$ be a prime. We study the kernel of the map, between the Burnside ring of $G$ and the Grothendieck ring of $\mathbb{F}_p[G]$-modules, taking a $G$-set to its associated permutation module. We are able, for all finite groups, to classify the primitive quotient of the kernel; that is for each $G$, the kernel modulo elements coming from the kernel for proper subquotients of $G$.  We are able to identify exactly which groups have non-trivial primitive quotient and we give generators for the primitive quotient in the soluble case.
\end{abstract}

\section{Introduction}
In this paper we will describe, for a prime $p$ and a finite group $G$, the kernel of the map $m_{\mathbb{F}_p,ss}(G)$ from the Burnside ring $b(G)$ to the Grothendieck ring of $\mathbb{F}_p[G]$ modules $G_0(\mathbb{F}_p[G])$ (equivalently the ring of Brauer characters), 
which takes the isomorphism class of a finite $G$-set $X$ to the class of $\mathbb{F}_p[X]$ in $G_0(\mathbb{F}_p[G])$. The $\mathbb{F}_p[G]$-module $\mathbb{F}_p[X]$ has an $\mathbb{F}_p$-basis indexed by elements of $X$, $G$ acts by permuting this basis in the obvious way.
The class of $\mathbb{F}_p[X]$ in $G_0(\mathbb{F}_p[G])$ is equal to that of its semisimplification and is by definition determined by its composition factors. Note that composition factors do not in general determine isomorphism class, so two isomorphism classes of $G$-sets may have the same image in $G_0(\mathbb{F}_p[G])$ even 
if their associated permutation modules are not isomorphic. 
\par
\begin{definition}

Let $G$ be a finite group. The additive group of the \emph{Burnside ring}   $b(G)$ is the free abelian group on isomorphism classes of finite $G$ sets modulo the relations $[X]+[Y]-[X\coprod Y]$, where $[X]$ denotes the isomorphism class of $X$. Furthermore,  $b(G)$ is equipped with a ring structure defined by $[X]\cdot [Y]=[X\times Y]$.
\end{definition}
\begin{remark}
The Burnside ring is isomorphic to the free abelian group on isomorphism classes of transitive $G$ sets. Moreover, by the orbit-stabiliser theorem each transitive $G$-set is isomorphic to  $G/H$, where $H\le G$ is a stabiliser of an element of the $G$-set, and is determined uniquely up to $G$-conjugacy.
Thus we may identify $b(G)$, as an abelian group, with the free abelian group on conjugacy classes of subgroups of $G$.
\end{remark}
\begin{definition}
Let $G$ be a finite group and $R$ be a commutative ring, the \emph{Grothendieck Group} of $R[G]$ modules $G_0(R[G])$ is defined to be the free abelian group on isomorphism classes of
$R[G]$-modules modulo the relations $[A]-[C]+[B]$ for every short exact sequence of $R[G]$-modules:
$$0\rar A \rar C \rar B \rar 0.$$
Furthermore, there is a ring structure with multiplication given by $[A]\cdot [B]=[A\otimes_R B]$.
\end{definition} 
 Consider the map:
 \begin{align*}
 	m_{\mathbb{F}_p,ss}(G): b(G) &\rar G_0(\mathbb{F}_p[G])
 	\\ [H] &\mapsto [\Ind_{G/H}(1)],
 	\end{align*}
 	where $\Ind_{G/H}(-)$ denotes the induction map from $H$ to $G$ and $1$ denotes the trivial $\mathbb{F}_p[H]$-module. Elements of the kernel $K_{\mathbb{F}_p,ss}(G)$ will be called  \emph{Brauer Relations} for $G$ over $\mathbb{F}_p$ semisimplified or relations over $\mathbb{F}_p,ss$ for short.\\ Following
\cite{BD} and \cite{GFI}  a relation for $G$ is said to be imprimitive if it is a linear combination of relations which are ``inflated'' from proper quotients of $G$ or ``induced'' from proper subgroups. We will then describe, for a finite group $G$ and prime $p$, 
the structure of the kernel modulo imprimitive relations 
 as an abelian group and describe explicit generators in the case $G$ is soluble. More precisely our main aim is the following theorem:
\begin{theorem}
Let $p$ be a prime, and $G$ be a finite group of order divisible by $p$,  all elements of $\ker(m_{\mathbb{F}_p,ss}(G))$ are linear combinations of relations ``induced'' or ``inflated'' from $\ker(m_{\mathbb{F}_p,ss}(H))$ from subquotients  $H$ of the following forms:
\begin{enumerate}
	\item a cyclic group $C_p$ of order $p$,
	\item non-cyclic $q$-quasi-elementary groups with order coprime to $p$,
	\item $(C_l\rtimes C_{q^r})\times (C_l\rtimes C_{q^s})$ for primes $q,l$ with $l\ne p$ and the action faithful,
	\item an extension $1 \rar S^d \rar E \rar H \rar 1$ with $S$ simple, $d$ a positive integer, $H=C\rtimes Q$ a quasi-elementary group whose cyclic part $C$ is of order coprime to $p$, and $S^d$ is a unique minimal normal subgroup of $E$.
\end{enumerate}
In cases (1),(3) and the soluble case of (4) the construction of generating elements of  $\ker(m_{\mathbb{F}_p,ss}(H))$ is given explicitly.
\end{theorem}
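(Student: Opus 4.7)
The plan is to proceed by strong induction on $|G|$. Given $\Theta \in \ker(m_{\mathbb{F}_p,ss}(G))$, I want to show that $\Theta$ is a $\mathbb{Z}$-linear combination of inductions and inflations from kernels for proper subquotients of $G$, unless $G$ itself has one of the shapes (1)--(4). The technical workhorse throughout is the character-theoretic reformulation of the kernel: $\sum n_H [G/H] \in \ker(m_{\mathbb{F}_p,ss}(G))$ if and only if $\sum n_H \dim_{\mathbb{F}_p}(V^H) = 0$ for every simple $\mathbb{F}_p[G]$-module $V$, equivalently $\sum n_H \langle \Ind_H^G 1, \varphi\rangle = 0$ for every simple Brauer character $\varphi$.

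The first reduction, following the pattern of \cite{BD}, is to monolithic groups: if $G$ has two distinct minimal normal subgroups $N_1, N_2$, then a double-coset / inclusion--exclusion argument in $b(G)$ expresses every relation as a sum of relations inflated from $G/N_1$ and $G/N_2$, so $\Theta$ is already imprimitive. Hence we may assume $G$ has a unique minimal normal subgroup $N = S^d$ for some simple group $S$.

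The next step is the elementary reduction. Brauer's induction theorem for modular characters shows every simple Brauer character of $G$ is a $\mathbb{Z}$-linear combination of characters induced from $p$-elementary subgroups; dualising via Frobenius reciprocity, the primitive part of $\ker(m_{\mathbb{F}_p,ss}(G))$ modulo induced relations is detected on such subgroups, forcing $G$ itself to be close to quasi-elementary, or else an extension of a quasi-elementary group by $N$. The arithmetic dichotomy now splits the problem: when $p \nmid |G|$ the semisimplified map coincides with the ordinary one, and the classification of \cite{BD} (transplanted to $\mathbb{F}_p$-coefficients) yields cases (2) and (3); when $G = C_p$, the structure of $\mathbb{F}_p[C_p]$ as an indecomposable module of length $p$ with trivial composition factors immediately yields the generator $[C_p/1] - p[C_p/C_p]$ of case (1); the remaining situations funnel into case (4).

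The main obstacle will be case (4): pinning down exactly which monolithic extensions $1 \to S^d \to E \to H \to 1$, with $H$ quasi-elementary of the prescribed form, admit primitive Brauer relations, and in the soluble subcase exhibiting explicit generators. Here I would use Clifford theory to describe the simple $\mathbb{F}_p[E]$-modules in terms of their restrictions to $S^d$ and projective representations of inertia subgroups in $H$; the hypothesis that $S^d$ is the unique minimal normal subgroup is exactly what prevents further inflation reductions. For explicit generators in the soluble case, I would construct candidate relations from pairs of subgroups $(H_1, H_2)$ of $E$ whose permutation modules share all composition factors, leveraging the explicit structure of $H = C \rtimes Q$ acting on $S^d$ to guarantee enough such pairs to span $\ker(m_{\mathbb{F}_p,ss}(E))$ modulo imprimitives.
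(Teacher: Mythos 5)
Your opening reduction is where the argument breaks: it is simply not true that a group with two distinct minimal normal subgroups has only imprimitive relations, and no double-coset or inclusion--exclusion manipulation in $b(G)$ can prove it. Take $G=C_q\times C_q$ with $q\ne p$: it has $q+1$ minimal normal subgroups $N_1,\dots,N_{q+1}$, and the relation $[\{e\}]-\sum_i[N_i]+q[G]$ is primitive over $\mathbb{Q}$, hence (by Remark \ref{rem:inc}) over $\mathbb{F}_p,ss$. Such non-cyclic $q$-quasi-elementary groups are precisely item (2) of the statement you are trying to prove, so your first step would delete part of the answer. The correct dichotomy, which the paper imports from \cite[Theorem 4.7]{GFI}, is that a group carrying a primitive relation is either \emph{primordial} for $\Im(m_{\mathbb{F}_p,ss})$ or a monolithic extension $1\rar S^d\rar G\rar H\rar 1$ of a primordial group $H$; the non-monolithic examples above survive because they are primordial. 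This forces one to first determine the primordial groups, which the paper does via Artin induction, Yoshida's theorem, and a separate direct computation for $p$-quasi-elementary groups (Lemmas \ref{lem:copriss} and \ref{lem:primordials}). Your appeal to ``Brauer induction on $p$-elementary subgroups'' aims at the wrong class: the relevant subgroups are quasi-elementary with cyclic part of order coprime to $p$, and the relevant functor is $\Im(m_{\mathbb{F}_p,ss})$, not all of $G_0(\mathbb{F}_p[G])$.

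Even granting the reduction to extensions $S^d.H$, the remainder of your proposal is a plan rather than a proof, and it omits the two places where genuine work is needed. First, the soluble structural analysis (Theorem \ref{thm:sol}) must show the extension splits (a Hochschild--Serre argument, Lemma \ref{lem:SESsplit}) and that the action is faithful and irreducible, and this analysis produces $p$-groups and $p$-quasi-elementary groups as possible outputs. Second, those groups do \emph{not} appear in the final list, so one must prove that every $p$-quasi-elementary group other than $C_p$ has trivial primitive quotient; the paper does this by exhibiting a full-rank \emph{saturated} sublattice of imprimitive relations built from inflations of $p[C_p]-[\{e\}]$ (Theorem \ref{thm:noprimss}). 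Nothing in your Clifford-theoretic sketch addresses this step, and without it your induction cannot close. Likewise, the explicit soluble generators are not found by ad hoc pairs of subgroups with matching composition factors, but are specific combinations of the characteristic-zero relations of \cite{BD} with induced and inflated copies of $p[C_p]-[\{e\}]$ (Lemma \ref{lem:1dim}).
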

\par

\begin{acknowledgements}

The author was supported by an EPSRC Doctoral Grant and would like to thank them for their financial support. The author would like to thank the reviewer for their helpful comments which helped improve the quality and clarity of this manuscript. The author would also like to thank Alex Bartel for helpful remarks on an earlier version and for discussions during the development of this paper. The author is also indebted to Gareth Tracey and George Callender for their careful comments on earlier drafts. Finally the author would like to thank Alex Torzewski for helpful comments and advice, and discussions while preparing this paper.

\end{acknowledgements}

\section{Primordial and coprimordial groups for $\Im(m_{\mathbb{F}_{p},ss})$}
We will now introduce Mackey and Green functors with inflation.
The following definitions can also be found in \cite[Section 2]{GFI} following  Webb \cite[Section 1]{Webb2}. We will assume that $R$ is a domain.
\begin{definition}
A \emph{global Mackey functor with inflation} (MFI)
over $R$ is a collection $\cF$ of the following data.
\begin{itemize}
\item For every finite group $G$, $\cF(G)$ is an $R$-module;
\item for every injection $\alpha\colon H\hookrightarrow G$ of finite groups,
$\cF_*(\alpha)\colon \cF(H)\rightarrow \cF(G)$ is a covariant $R$-module homomorphism
(which we think of as induction);
\item for every homomorphism $\epsilon\colon H\rar G$ of finite groups,
$\cF^{*}(\epsilon)\colon \cF(G)\rar \cF(H)$ is a contravariant $R$-module
homomorphism (which we think of as restriction when $\epsilon$ is a injection,
and as inflation when $\epsilon$ is an surjection);
\end{itemize}
satisfying the following conditions.
\begin{enumerate}
\item[(MFI 1)] Transitivity of induction:
for all group injections
$U\stackrel{\beta}{\hookrightarrow}H\stackrel{\alpha}{\hookrightarrow}G$, we
have $\cF_*(\alpha\beta) = \cF_*(\alpha)\cF_*(\beta)$.
\item[(MFI 2)] Transitivity of restriction/inflation:
for all group homomorphisms
$U\stackrel{\beta}{\rar}H\stackrel{\alpha}{\rar}G$, we have
$\cF^*(\alpha\beta) = \cF^*(\beta)\cF^*(\alpha)$.
\item[(MFI 3)] For all inner automorphisms $\alpha\colon G\rar G$, we have
$\cF^*(\alpha)=\cF_*(\alpha)=~1$.
\item[(MFI 4)] For all automorphisms $\alpha$, we have
$\cF_*(\alpha)=\cF^*(\alpha^{-1})$.
\item[(MFI 5)] The Mackey condition: for all pairs of injections
$\alpha\colon H\hookrightarrow G$ and $\beta\colon K\hookrightarrow G$, 
$$
\cF^*(\beta)\cF_*(\alpha)=\sum_{g\in \alpha(H)\backslash G/\beta(K)}\cF_*(\phi_g)\cF^*(\psi_g),
$$
where $\phi_g$ is the composition
$$
\phi_g\colon \beta(K)^g\cap \alpha(H)\stackrel{c_g}{\rar}
\beta(K)\cap {}^g\alpha(H)\hookrightarrow \beta(K)\stackrel{\beta^{-1}}{\rar}K,
$$
$c_g$ denoting conjugation by $g$,
and $\psi_g$ is the composition
$$
\psi_g\colon \alpha(H)\cap \beta(K)^g\hookrightarrow \alpha(H)
\stackrel{\alpha^{-1}}{\rar}H.
$$
\item[(MFI 6)] Commutativity of induction and inflation: whenever there is a commutative
diagram
$$
\xymatrix{
H\ar[d]_{\epsilon}\ar[r]^{\alpha}& G\ar[d]^{\delta}\\
\bar{H}\ar[r]^{\beta}& \bar{G},
}
$$
where $\epsilon,\delta$ are surjections, and $\alpha,\beta$ are injections,
we have $\cF^{*}(\delta)\cF_*(\beta)=\cF_*(\alpha)\cF^{*}(\epsilon)$.
\end{enumerate}
\end{definition}
We will henceforth write $\Res_{G/H}(-)$ and $\Inf_{G/N}(-)$ for $\cF^*(\alpha)(-)$  when $\alpha$ is an injection or surjection respectively. We will write $\Ind_{G/H}(-)$ for $\cF_*(\beta)(-)$. 
If a Mackey functor takes Groups to $R$-algebras, inflation and restriction are $R$-algebra homomorphisms, and if it satisfies a form of Frobenius reciprocity, then we call it a Green functor which we will now define precisely.
\begin{definition}
A \emph{Green functor with inflation} (GFI) over $R$ is an MFI $\cF$ over $R$,
satisfying the following additional conditions.
\begin{enumerate}
\item[(GFI 1)] For every finite group $G$, $\cF(G)$ is an $R$-algebra.

\item[(GFI 2)] For every homomorphism $\alpha\colon H\rar G$ of finite groups,
$\cF^*(\alpha)$ is a homomorphism of $R$-algebras.

\item[(GFI 3)] Frobenius reciprocity: for every injection $\alpha\colon H\hookrightarrow G$
and for all $x\in \cF(H)$, $y\in \cF(G)$, we have
\beq
\Ind_{G/H}(x)\cdot y & = &
\Ind_{G/H}(x\cdot \Res_{G/H}(y)),\\
y\cdot\Ind_{G/H}(x) & = &
\Ind_{G/H}(\Res_{G/H}(y)\cdot x).
\eeq
\end{enumerate}
\end{definition}
\begin{definition}
A \emph{morphism} from an MFI (respectively GFI) $\cF$ to an MFI (respectively GFI)
$\cG$ is a collection $r(-)$ of $R$-module (respectively $R$-algebra) homomorphisms
$r(G): \cF(G)\rar \cG(G)$ for each finite group $G$, commuting in the obvious way
with $\cF_*,\cF^*,\cG_*,$ and $\cG^*$.
\end{definition}

\begin{notation}\label{not:indres}
Let $\cF$ be an MFI, and let $\cX$ be a class of groups
closed under isomorphisms. For every finite group $G$, we
define the following $R$-submodules of~$\cF(G)$:
\begin{eqnarray*}
\cI_{\cF,\cX}(G) & = & \sum_{H\leq G, \;H\in \cX}\Ind_{G/H}\cF(H),\\
\cI_{\cF}(G) & = & \sum_{H\lneq G}\Ind_{G/H}\cF(H),\\
\cK_{\cF,\cX}(G) & = & \bigcap_{H\leq G, \;H\in \cX} \ker(\Res_{G/H} \cF(G)),\\
\cK_{\cF}(G) & = & \bigcap_{H\lneq G} \ker(\Res_{G/H} \cF(G)).
\end{eqnarray*}
\end{notation}
Following Th\'evenaz \cite{Thevenaz} and Boltje \cite{Boltje-GTCI} we make the following definition.
\begin{definition}\label{def:primordial}
Let $\cF$ be an MFI and let $G$ be a finite group.
\\
 We say that $G$ is
\emph{primordial} for $\cF$ if either $G$ is trivial, or
$\cF(G)\ne \cI_{\cF}(G)$. We denote the class
of all primordial groups for $\cF$ by $\cP(\cF)$.
\\
We say that $G$ is \emph{coprimordial} for $\cF$ if either $G$ is trivial, or
$\cK_{\cF}(G)\ne 0$. We denote the class of all coprimordial groups for $\cF$
by $\cC(\cF)$.
\end{definition}

\par
The functors taking a finite group $G$ to $b(G)$ and to $G_0(\mathbb{F}_p[G])$ for any prime $p$ are both GFIs over $\mathbb{Z}$, furthermore the map $m_{\mathbb{F}_p,ss}$ defined by:
\begin{align*}m_{\mathbb{F}_p,ss}(G) : b(G) &\longrightarrow G_0(\mathbb{F}_p[G])\\
[H] &\longmapsto \Ind_{G/H}(1)\end{align*}
is a morphism of GFIs over $\mathbb{Z}$. By  \cite[Lemma 2.8]{GFI} the kernel of this map $\K_{\F_p,ss}(-)$ is an ideal of $b(-)$.  As in the introduction, we will refer to elements of $\K_{\F_p,ss}(G)$ as \emph{Brauer relations} for $G$ over $\mathbb{F}_p$ semisimplified, or over $\mathbb{F}_p,ss$ as a shorthand. We exploit the machinery of \cite{GFI} to classify elements of the kernel. Note that this map of Green functors satisfy the assumptions of \cite{GFI} made in  Notation 4.3 with $D(G)=1$ and in Assumption 4.8 are satisfied so the results of \cite{GFI} hold in this case. 

\begin{notation}
	Let $G$ be a finite group.
	\\
	Let
	 $$\Imprim(G)=\{ \sum_{H<G}\Ind_{G/H}(\K_{\F_p,ss}(H))+\sum_{1 \ne N\triangleleft G} \Inf_{G/N}(\K_{\F_p,ss}(G/N))\},$$ and let $$\Prim_{\K_{\F_p,ss}}(G)=\K_{\F_p,ss}(G)/\Imprim(G).$$
\end{notation}

The following theorem,  well known in the literature (see for instance \cite{Benson}), is the prototypical example of an induction theorem. 
This will be the main tool we require to describe $K(-)$.\begin{theorem}\label{thm:Artin}[Artin's Induction Theorem]
	Let $G$ be a finite group, $p$ be a prime, and $G_0(\mathbb{F}_p[G])$ be the Grothendieck ring on $\mathbb{F}_p[G]$-modules then:
	$$ G_0(\mathbb{F}_p[G])\otimes \mathbb{Q}=\sum_{C\in S}\Ind_{G/C}( G_0(\mathbb{F}_p[C])\otimes \mathbb{Q})$$
	where S is a set of conjugacy class representatives of cyclic subgroups of order coprime to $p$. Furthermore, we have;
	$$1_{G_0(\mathbb{F}_p[G])}=\sum_{C\in S}a_c\Ind_{G/C}(1_{G_0(\mathbb{F}_p[C])})$$
	where $a_c\in \mathbb{Q}$.
\end{theorem}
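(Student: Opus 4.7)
I plan to deduce the modular Artin theorem from the classical one by passing through Brauer characters. The Brauer character map identifies $G_0(\mathbb{F}_p[G])\otimes\mathbb{C}$ with the space of $\mathbb{C}$-valued class functions on the $p$-regular elements of $G$, and sends $[\Ind_{G/H}(1)]$ to the restriction of the complex permutation character $\pi_{G/H}$ to the $p$-regular classes. Hence it suffices to prove both statements at the level of such class functions.

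The key reduction will be the following. For each cyclic $C \le G$, write $C=C_{p'}\times C_p$ for the $p'$- and $p$-parts; then the $\mathbb{F}_p[C]$-module $\Ind_{C_{p'}}^{C}(1)\cong \mathbb{F}_p[C_p]$ has all composition factors isomorphic to the trivial module because $C_p$ is a $p$-group. This gives the identity
$$[\Ind_{G/C_{p'}}(1)] \;=\; |C_p|\cdot [\Ind_{G/C}(1)]$$
in $G_0(\mathbb{F}_p[G])$; equivalently, a direct double-coset computation shows $\pi_{G/C}|_{p\text{-reg}}=\tfrac{1}{|C_p|}\pi_{G/C_{p'}}|_{p\text{-reg}}$. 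So any induced trivial module from an arbitrary cyclic subgroup can, up to a rational factor, be replaced by one induced from a cyclic $p'$-subgroup.

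Next I will apply the classical Artin theorem to $1_G$ to obtain rationals $a_C$, indexed by conjugacy classes of cyclic $C \le G$, with $1_G=\sum_C a_C \pi_{G/C}$ as class functions on all of $G$. Restricting both sides to $p$-regular classes, substituting the reduction above, and grouping cyclic subgroups by their $p'$-parts will yield the second displayed identity of the theorem. For the first identity, multiplying both sides by an arbitrary $[M]\in G_0(\mathbb{F}_p[G])\otimes\mathbb{Q}$ and invoking Frobenius reciprocity (GFI 3) will produce $[M]=\sum_{C'}b_{C'}\Ind_{G/C'}(\Res_{G/C'}[M])$, placing every element of $G_0(\mathbb{F}_p[G])\otimes\mathbb{Q}$ in the claimed sum. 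The step requiring the most care is the reduction from cyclic to cyclic $p'$-subgroups, i.e. the composition-factor computation for $\mathbb{F}_p[C_p]$ and its compatibility with induction; the remainder is a formal application of the classical Artin theorem and Frobenius reciprocity.
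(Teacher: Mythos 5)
Your argument is correct. Note that the paper itself gives no proof of this statement: it is recorded as a classical fact with a pointer to the literature (Benson), so there is no in-text argument to compare yours against. What you supply is essentially the standard derivation that such references contain: the decomposition map (equivalently, restriction of ordinary characters to $p$-regular classes under the Brauer character identification) sends the classical Artin identity $1_G=\sum_C a_C\,\pi_{G/C}$ into $G_0(\mathbb{F}_p[G])\otimes\mathbb{Q}$; the key extra step in characteristic $p$ is your reduction $[\Ind_{G/C}(1)]=\tfrac{1}{|C_p|}[\Ind_{G/C_{p'}}(1)]$, which is exactly what lets one shrink the indexing set from all cyclic subgroups to cyclic subgroups of order coprime to $p$ (and this computation, that $\mathbb{F}_p[C_p]$ has only trivial composition factors, is sound); and the passage from the identity for $1$ to the statement about the whole ring via Frobenius reciprocity is the standard Green-functor argument, consistent with how the paper uses (GFI 3) elsewhere. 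The one point worth making explicit if you write this up in full is that the decomposition map is a ring homomorphism carrying $[\mathbb{Q}[G/H]]$ to $[\mathbb{F}_p[G/H]]$ (permutation modules are reductions of lattices), since that is what justifies transporting the characteristic-zero identity into $G_0(\mathbb{F}_p[G])$.
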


We will make use of the following straightforward result, which follows from the work of Yoshida \cite{Yoshida}.
\begin{proposition}\label{prop:primordial}
	Let $\cF$ be a GFI over a Euclidean domain $R$, and assume that $\cF(G)$ is $R$-torsion free for
	all finite groups $G$. Let $Q$ denote the field of fractions of $R$. Then:
	\begin{enumerate}
		\item if  $Q$ has characteristic 0, then $\cP(\cF\otimes Q) = \cC(\cF\otimes Q)$;
		\item for any prime ideal $\mathfrak{p}$ of $R$, we have
		$\cC(\cF)=\cC(\cF\otimes R_{\mathfrak{p}})$, and in particular $\cC(\cF)=\cC(\cF\otimes Q)$;
		\item we have $\cP(\cF) \subseteq \cup_{\mathfrak{p}} \cP(\cF\otimes R_{\mathfrak{p}})$;
		\item if $Q$ has characteristic 0, then for any prime ideal $\mathfrak{p}$ of $R$,
		we have $\cP(\cF\otimes R_{\mathfrak{p}})\subseteq  \{ H : O^{p}(H)\in \cC(\cF)\}$,
		where $(p)=\mathfrak{p}\cap\bZ$.
	\end{enumerate}
\end{proposition}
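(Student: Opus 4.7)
The plan is to dispatch (2) and (3) as routine flatness and PID facts, to invoke Yoshida-style semisimple linear algebra for (1), and to reserve most of the effort for (4), which will require a $p$-local induction theorem.

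For parts (2) and (3), I would use that localization $R\to R_\mathfrak{p}$ is flat and commutes with kernels, cokernels, and the finite intersections and sums appearing in Notation~\ref{not:indres}. Consequently $\cK_{\cF\otimes R_\mathfrak{p}}(G)=\cK_\cF(G)\otimes_R R_\mathfrak{p}$, and likewise the cokernel $\cF(G)/\cI_\cF(G)$ commutes with this base change, all functorially in $G$. Since $\cF(G)$, and hence $\cK_\cF(G)$, is torsion free over the PID $R$, and since any nonzero module over a PID has a nonzero localization at some prime ideal (witnessed via the annihilator of any nonzero element), both (2) and (3) follow directly from these base-change identities.

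For part (1), I would appeal to Yoshida's semisimple analysis in characteristic zero. Combining the Green functor multiplication with Frobenius reciprocity equips $\cF(G)\otimes Q$ with a canonical symmetric bilinear form, built from compositions of the shape $\Res\circ\Ind$, with respect to which $\cI_\cF(G)\otimes Q$ and $\cK_\cF(G)\otimes Q$ are mutual annihilators. Nondegeneracy of this form over $Q$ follows from Artin's induction theorem (Theorem~\ref{thm:Artin}) transported through the given morphism of Green functors, and immediately yields $\cI_\cF(G)\otimes Q=\cF(G)\otimes Q$ if and only if $\cK_\cF(G)\otimes Q=0$, i.e.\ $\cP(\cF\otimes Q)=\cC(\cF\otimes Q)$.

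The main work, and the main obstacle, is part (4). Here I would invoke a Conlon-style $p$-local induction theorem for Green functors to expand $1_{\cF_{R_\mathfrak{p}}(G)}=\sum_{U}a_U\Ind^G_U(1)$ with $U$ ranging over $p$-hyperelementary subgroups of $G$ and $a_U\in R_\mathfrak{p}$; this is obtained by transporting the corresponding identity in $b(G)\otimes\bZ_{(p)}$ through the unit map $b\to\cF$. Combined with Frobenius reciprocity it forces $\cF_{R_\mathfrak{p}}(G)=\sum_U\Ind^G_U\cF_{R_\mathfrak{p}}(U)$, so any primordial $G$ for $\cF\otimes R_\mathfrak{p}$ must itself be $p$-hyperelementary, i.e.\ $G=C\rtimes P$ with $P$ a Sylow $p$-subgroup and $C=O^p(G)$ cyclic of order coprime to $p$. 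A witness $x\in\cF_{R_\mathfrak{p}}(G)$ of primordiality, restricted to $O^p(G)$, then produces via the Mackey formula (using that every proper subgroup of $G$ containing $C$ has the form $C\rtimes P'$ with $P'<P$) a nonzero element of $\cK_{\cF\otimes R_\mathfrak{p}}(O^p(G))$, which by (2) yields $O^p(G)\in\cC(\cF)$. The delicate ingredient is the $p$-local induction theorem; once available, the remainder is Mackey bookkeeping.
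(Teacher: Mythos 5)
Parts (2) and (3) of your proposal are fine and amount to the same localization argument the paper uses: torsion-freeness gives $\cK_{\cF\otimes R_{\mathfrak p}}(G)=\cK_{\cF}(G)\otimes R_{\mathfrak p}$, and a module over a PID that dies at every maximal ideal is zero. For part (1), your intended conclusion is right but the justification is not: there is no ``given morphism of Green functors'' in the statement of the proposition, and nondegeneracy of a $\Res\circ\Ind$ pairing cannot be deduced from Artin's induction theorem, which concerns the specific functor $G_0(\bF_p[-])$ rather than an arbitrary GFI. The paper instead cites Dress's theorem, which for any Green functor over a field of characteristic $0$ gives the direct sum decomposition $\cF(G)\otimes Q=\cK_{\cF\otimes Q}(G)\oplus\cI_{\cF\otimes Q}(G)$, from which (1) is immediate.

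The serious gap is in part (4). Your proposed $p$-local induction theorem does not exist: in $b(G)\otimes\bZ_{(p)}$ the identity is \emph{not} in general a combination of $\Ind_{G/U}(1)$ over $p$-hyperelementary $U$. Indeed every finite group is primordial for $b\otimes\bZ_{(p)}$ (for $G$ simple non-abelian the primitive idempotent $e_G$ lies in $b(G)\otimes\bZ_{(p)}$, restricts to zero on every proper subgroup, and hence by Frobenius reciprocity annihilates $\cI_{b\otimes\bZ_{(p)}}(G)$, so $1\notin\cI$). So there is nothing to transport through the unit map $b\to\cF$. Moreover, even granting such a theorem, it would bound $\cP(\cF\otimes R_{\mathfrak p})$ by the class of groups with $O^p(H)$ \emph{cyclic}, whereas the statement requires $O^p(H)\in\cC(\cF)$ --- a class that depends on $\cF$ and need not consist of cyclic groups (again $\cF=b$, with $\cC(b)$ equal to all finite groups, shows your intermediate claim is false in general). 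The final step, that restricting a witness of primordiality to $O^p(G)$ lands in $\cK_{\cF\otimes R_{\mathfrak p}}(O^p(G))$, is also unsubstantiated. The paper's route is different and is the one that works: apply Yoshida's relative induction theorem with $\mathscr X=\cC(\cF)$, which, since $(\#G)_{p'}$ is invertible in $R_{\mathfrak p}$, yields
$$\cF(G)\otimes R_{\mathfrak p}=\cI_{\cF\otimes R_{\mathfrak p},\,\cH_p(\cC(\cF))}(G)+\cK_{\cF\otimes R_{\mathfrak p},\,\cC(\cF)}(G),\qquad \cH_p(\cX)=\{H:O^p(H)\in\cX\},$$
and then kill the second summand using part (2) (an element restricting to zero on all coprimordial subgroups is zero, by induction on the order). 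That relative induction theorem, not a Conlon-type theorem, is the missing ingredient.
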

\begin{proof}
	\mbox{}
	
\begin{enumerate}
		\item
		If $Q$ has characteristic 0, then by
		\cite[Theorem 7.1]{Dress} (see also Theorems 2 and 4 of \cite{Dress2}), we have a  decomposition of $Q$-modules
		$\cF(G)\otimes Q=\cK_{\cF\otimes Q}(G)\oplus \cI_{\cF\otimes Q}(G)$. The result follows.
		\item Since $\cF(G)$ is $R$-torsion free for all finite groups $G$,
		$\cF(G)$ naturally injects into $\cF(G)\otimes R_{\mathfrak{p}}$ and generates $\cF(G)\otimes R_{\mathfrak{p}}$
		over $R_{\mathfrak{p}}$ and similarly 
		over $Q$.
		Moreover, this inclusion is functorial with respect to restriction. It follows
		that we have a natural isomorphism $\cK_{\cF\otimes R{\mathfrak{p}}}(G) =  \cK_{\cF}(G)\otimes R_{\mathfrak{p}}$, and in particular one
		of these kernels is non-trivial if and only if both are, as claimed.
		\item Suppose that $G \notin \cup_{\mathfrak{p}} \cP(\cF\otimes R_{\mathfrak{p}})$ then in particular $1_{\cF \otimes R_{\mathfrak{p}(G)}}\in \cI_{\cF\otimes R_{\mathfrak{p}}}(G)$ for all $\mathfrak{p}$. Since $R$ is Euclidean it follows that $1_{\cF(G)} \in \cI_\cF(G)$ and as $\cI_\cF (G)$ is an ideal in $\cF(G)$ they coincide so $G \notin \cP(\cF)$.
		\item
		Let $G$ be a finite group, and let $p\in \mathbb{Z}$ be such that $\mathfrak{p} \mid p$. Let $\cH_p(\cC(\cF))=\{H: O^{p}(H)\in \cC(\cF)\}$. Let $(\#G)_{p'}$ denote the maximal divisor of $\#G$ which is coprime to $p$.
		Since $(\#G)_{p'}$ is invertible in $R_{\mathfrak{p}}$, \cite[Theorem 4.1]{Yoshida} applied with $\mathscr{X}=\{H\leq G: H \in \cC(\cF)\}$ implies that
		$\cF(G) \otimes R_{\mathfrak{p}}= \cI_{\cF\otimes R_{\mathfrak{p}},\cH_p(\cC(\cF))}(G)+
		\cK_{\cF\otimes R_{\mathfrak{p}},\cC(\cF)}(G)$. By part (2)
		of the present lemma, we have $\cC(\cF)=\cC(\cF\otimes R_{\mathfrak{p}})$. It then follows that $\cK_{\cF_{\mathfrak{p}},\cC(\cF)}(G)=
		\cK_{\cF_{\mathfrak{p}},\cC(\cF_{\mathfrak{p}})}(G)=0$ by definition \ref{def:primordial},
		and therefore that  $\cF_{\mathfrak{p}}(G)= \cI_{\cF_{\mathfrak{p}},\cH_p(\cC(\cF))}(G)$.
		So $\cP(\cF\otimes R_{\mathfrak{p}})\subseteq \cH_p(\cC(\cF))$, as claimed.
	\end{enumerate}
\end{proof}

\begin{lemma}\label{lem:copriss}
	The coprimordial groups for $\Im(m_{\mathbb{F}_p , ss}) \subseteq G_0(\mathbb{F}_p[-])$ are precisely the cyclic groups of order coprime to $p$.
\end{lemma}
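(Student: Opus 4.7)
The plan is to reduce the problem to a rational computation via Proposition \ref{prop:primordial}, and then carry out the two halves using Artin's theorem on one side and a short character argument on the other.

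Write $\cF = \Im(m_{\mathbb{F}_p,ss})\subseteq G_0(\mathbb{F}_p[-])$. Since $G_0(\mathbb{F}_p[G])$ is a free abelian group on isomorphism classes of simple $\mathbb{F}_p[G]$-modules, $\cF(G)$ is $\mathbb{Z}$-torsion free, and $\cF$ is a sub-GFI of $G_0(\mathbb{F}_p[-])$. Hence Proposition \ref{prop:primordial} applies with $R=\mathbb{Z}$, $Q=\mathbb{Q}$. Combining parts (1) and (2) gives
$$\cC(\cF) \;=\; \cC(\cF\otimes\mathbb{Q}) \;=\; \cP(\cF\otimes\mathbb{Q}),$$
so it suffices to determine the primordial groups of $\cF\otimes\mathbb{Q}$.

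Suppose first that $G$ is not cyclic of order coprime to $p$. I invoke Theorem \ref{thm:Artin} to write $1_{G_0(\mathbb{F}_p[G])} = \sum_{C\in S} a_C \Ind_{G/C}(1)$ with $a_C\in\mathbb{Q}$ and $S$ a set of representatives for conjugacy classes of cyclic $p'$-subgroups of $G$. By hypothesis $G\notin S$, so every such $C$ is a proper subgroup, and $\Ind_{G/C}(1) = m_{\mathbb{F}_p,ss}([G/C])\in \Ind_{G/C}\cF(C)$. Therefore $1\in \cI_{\cF\otimes\mathbb{Q}}(G)$, and because $\cI_{\cF\otimes\mathbb{Q}}(G)$ is an ideal of $\cF(G)\otimes\mathbb{Q}$ by Frobenius reciprocity (GFI 3), it coincides with the whole ring. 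Thus $G$ is not primordial for $\cF\otimes\mathbb{Q}$.

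Conversely, let $C=C_n$ be cyclic of order $n$ coprime to $p$; the trivial group is coprimordial by definition, so I assume $n>1$. Transitivity of induction together with $\Ind_{C/H}\mathbb{F}_p[H/K]\cong\mathbb{F}_p[C/K]$ shows that $\cI_{\cF\otimes\mathbb{Q}}(C)$ is the $\mathbb{Q}$-span of the permutation classes $[\mathbb{F}_p[C/K]]$ for proper subgroups $K<C$. Since $\gcd(n,p)=1$, $\mathbb{F}_p[C]$ is semisimple, and since $C$ is abelian a direct calculation gives the Brauer character of $\mathbb{F}_p[C/K]$ as $g\mapsto |C/K|\cdot\mathbf{1}_{K}(g)$. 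For any $K<C$ this vanishes on any generator of $C$, whereas the trivial character representing $1$ takes value $1$ there; hence $1\notin \cI_{\cF\otimes\mathbb{Q}}(C)$ and $C$ is primordial.

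The only potential subtlety is checking that Artin's identity, which is a priori a relation inside $G_0(\mathbb{F}_p[G])\otimes\mathbb{Q}$, actually descends to a relation in the sub-functor $\cF$; this is automatic because each summand $\Ind_{G/C}(1)$ is itself a permutation character and so comes from the Burnside ring. With this observation the two halves combine to give the desired classification.
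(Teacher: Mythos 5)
Your proof is correct, and while the first half coincides with the paper's argument (Artin induction plus Proposition \ref{prop:primordial} to show every coprimordial group is cyclic of order prime to $p$), your treatment of the reverse inclusion takes a genuinely different route. The paper establishes coprimordiality of $C_m$ ($p\nmid m$) directly over $\bZ$, by exhibiting an explicit non-zero element of $\Im(m_{\F_p,ss})(C_m)$ in the kernel of every proper restriction, namely a M\"obius-inversion combination of the classes $[\F_p[C_m/C_n]]$. You instead stay entirely on the $\mathbb{Q}$-side: you show $1\notin\cI_{\cF\otimes\mathbb{Q}}(C_m)$ by evaluating Brauer characters at a generator (every proper induction $[\F_p[C_m/K]]$ has character $|C_m/K|\cdot\mathbf{1}_K$, which vanishes there), and then convert primordiality over $\mathbb{Q}$ into coprimordiality over $\bZ$ via parts (1) and (2) of Proposition \ref{prop:primordial}. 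Both arguments are sound; the paper's has the merit of producing a concrete kernel element, while yours is arguably more robust in that it replaces the ``one may check'' combinatorial identity by a one-line evaluation --- indeed, the element as displayed in the paper appears to need its M\"obius indexing adjusted (for $m=4$ it reads $[C_1]-2[C_2]$, whose character is supported on the involution and so does not vanish on $C_2$; the intended element is $\sum_{n\mid m}\mu(m/n)\,n\,[C_n]$), a pitfall your character argument avoids entirely. The only points worth making explicit in your write-up are that evaluation at a fixed ($p$-regular) group element is a well-defined linear functional on $G_0(\F_p[C])\otimes\mathbb{Q}$, and that $\Im(m_{\F_p,ss})$ is indeed a sub-GFI of $G_0(\F_p[-])$ (closure under restriction and inflation follows from Mackey's formula and the fact that inflations of permutation modules are permutation modules), so that Proposition \ref{prop:primordial} applies; both are routine.
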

\begin{proof}
	
	Theorem \ref{thm:Artin} combined with Proposition \ref{prop:primordial} show that $\cC(G_0(\mathbb{F}_p[-]))=\cP(G_0(\mathbb{F}_p[-]) \otimes \mathbb{Q})$ is contained in the class of cyclic groups of order coprime to $p$. It remains to show the reverse inclusion. Let $C_m$ be a cyclic group of order coprime to $p$, we will exhibit an element in $\Im(m_{\mathbb{F}_p , ss})$ which is in the kernel of every proper restriction map. Let $\mu$ denote the M\"obius function. One may check that the element $m_{(\mathbb{F}_p,ss)}(\sum_{n\mid m}\mu (n) n [C_n])$, is a non-zero element of $\Im(m_{(\mathbb{F}_p,ss)})(C_m)$ which restricts to zero on every proper subgroup. It follows that $C_m$ is coprimordial. 
	\end{proof}

\begin{corollary}\label{cor:injcyclic}
	For cyclic groups $C$ of order coprime to $p$ the map $m_{\mathbb{F}_p, ss}(C)$ is injective.
\end{corollary}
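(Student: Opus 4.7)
The plan is to argue by induction on $|C|$, with Lemma~\ref{lem:copriss} supplying the key coprimordial witness. The base case $|C|=1$ is immediate since $b(1)$ and $G_0(\mathbb{F}_p[1])$ are both $\mathbb{Z}$ and the map is the identity.

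For the inductive step, let $C$ be cyclic of order $m>1$ coprime to $p$. The key observation I would record is that the inductive hypothesis ensures any $y\in b(C)$ with $m_{\mathbb{F}_p,ss}(C)(y)\in\cK_{G_0(\mathbb{F}_p[-])}(C)$ automatically lies in $\cK_{b(-)}(C)$: for any proper subgroup $C'<C$, the morphism property of $m_{\mathbb{F}_p,ss}$ gives $m_{\mathbb{F}_p,ss}(C')(\Res_{C/C'}(y))=\Res_{C/C'}(m_{\mathbb{F}_p,ss}(C)(y))=0$, and since $C'$ is a cyclic group of order coprime to $p$ and of smaller order than $C$, the inductive hypothesis forces $\Res_{C/C'}(y)=0$ in $b(C')$. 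In particular any $x\in\ker(m_{\mathbb{F}_p,ss}(C))$ automatically lies in $\cK_{b(-)}(C)$, so it suffices to show that $m_{\mathbb{F}_p,ss}(C)$ is injective on $\cK_{b(-)}(C)$.

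I would next show $\cK_{b(-)}(C)$ has rank at most~$1$ via the mark homomorphism, which embeds $b(C)$ into $\prod_{H\le C}\mathbb{Z}$. The $H$-mark of $\Res_{C/C'}(y)$ agrees with the $H$-mark of $y$ for any $H\le C'$, so an element of $\cK_{b(-)}(C)$ must have zero mark at every proper subgroup of $C$ and hence is determined by its mark at $C$; this gives an injection $\cK_{b(-)}(C)\hookrightarrow\mathbb{Z}$. Lemma~\ref{lem:copriss} then provides the explicit element $\tilde y=\sum_{n\mid m}\mu(n)n[C_n]\in b(C)$ whose image under $m_{\mathbb{F}_p,ss}(C)$ is nonzero and lies in $\cK_{G_0(\mathbb{F}_p[-])}(C)$; by the key observation above, $\tilde y\in\cK_{b(-)}(C)$.

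Putting these together, $\cK_{b(-)}(C)$ is a nonzero subgroup of $\mathbb{Z}$ on which $m_{\mathbb{F}_p,ss}(C)$ is not identically zero; since $G_0(\mathbb{F}_p[C])$ is torsion-free, the restriction of $m_{\mathbb{F}_p,ss}(C)$ to $\cK_{b(-)}(C)$ must be injective, which forces $x=0$ and completes the induction. The main subtlety is setting up the restriction-and-induction bookkeeping so that the inductive hypothesis can be applied both to $x$ (to place it in $\cK_{b(-)}(C)$) and to the coprimordial witness $\tilde y$ (to verify it too lies in $\cK_{b(-)}(C)$); once this observation is in place, the mark embedding and Lemma~\ref{lem:copriss} do all the remaining work.
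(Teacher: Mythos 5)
Your argument is correct. By induction on $|C|$ you place any element of $\ker(m_{\mathbb{F}_p,ss}(C))$ in $\cK_{b}(C)=\bigcap_{C'\lneq C}\ker(\Res_{C/C'})$, the table of marks shows that this group embeds into $\mathbb{Z}$ via the mark at $C$ itself, and the witness from Lemma~\ref{lem:copriss} (together with the same inductive observation) shows that $m_{\mathbb{F}_p,ss}(C)$ is nonzero on this rank-one group, hence injective on it because $G_0(\mathbb{F}_p[C])$ is torsion-free. This is a legitimate derivation, but it is not quite the route the paper has in mind: the corollary is stated with no proof as an immediate consequence of Lemma~\ref{lem:copriss}, and the remark that follows it records the intended short argument, namely a rank count. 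By Artin's induction theorem (Theorem~\ref{thm:Artin}), the rank of $\Im(m_{\mathbb{F}_p,ss})(C)$ equals the number of conjugacy classes of subgroups of $C$ that are cyclic of order coprime to $p$; when $C$ is itself cyclic of order coprime to $p$ this is the number of all subgroups of $C$, which is exactly the rank of $b(C)$. A surjection between free abelian groups of the same finite rank is injective, so $m_{\mathbb{F}_p,ss}(C)$ is injective. The rank count is shorter and needs no induction or mark bookkeeping; your version has the compensating merit of relying only on the \emph{statement} of Lemma~\ref{lem:copriss} (existence of a nonzero coprimordial witness), so it even sidesteps any need to re-verify the explicit M\"obius formula appearing in that lemma's proof. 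Both arguments are sound.
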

\begin{remark}
	Alternatively, one may show that the previous Corollary and preceding lemma follow immediately from Theorem \ref{thm:Artin} and the observation that the rank of $\Im(m_{\mathbb{F}_p,ss})$ is precisely the number of conjugacy classes of subgroups which are 
	 cyclic of order coprime to $p$.
\end{remark}
\begin{remark} \label{rem:inc} Corollary \ref{cor:injcyclic} combined with the analogous statement for $m_{\mathbb{Q}}$ (see \cite{BD}, \cite{Benson}) shows that $\ker(m_\mathbb{Q})(G)\subset \ker(m_{\mathbb{F}_p,ss})(G)$  for all $G$ with equality if $p\nmid |G|$. Extensive use will be made of this fact. Elements of $\ker(m_\mathbb{Q})$ will be called Brauer relations over $\mathbb{Q}$.
\end{remark}
\begin{example}
	The inclusion in Remark \ref{rem:inc} is in general strict. Over fields of characteristic $0$ cyclic groups admit no Brauer relations, but the kernel of $m_{\mathbb{F}_p,ss}$ need not be trivial. For example, there is a relation  $2[C_2]-[{e}]\in \K_{\F_2,ss}(C_2)$ for $C_2$ over $G_0(\mathbb{F}_2[C_2])$. Indeed, the regular representation of $C_2$ is indecomposable as an $\mathbb{F}_2[C_2]$-module, and has as its composition factors two copies of the trivial representation.
\end{example}
\begin{lemma}\label{lem:Cprel}
	Let $p$ be a prime, and let $C_p$ be the cyclic group of order $p$ then $\K_{\F_p,ss}(C_p)$ is generated by the relation $p[C_p]-[\{e\}]$.
\end{lemma}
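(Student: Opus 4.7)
The plan is to compute both sides of the map $m_{\F_p,ss}(C_p)$ explicitly and observe that it becomes a simple $\Z$-linear map on rank-$2$ and rank-$1$ free abelian groups whose kernel can be read off directly.

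First I would describe the domain. Since the subgroups of $C_p$ are, up to conjugacy, just $\{e\}$ and $C_p$ itself, the Burnside ring $b(C_p)$ is a free abelian group of rank $2$ with basis $[C_p]$ and $[\{e\}]$. Next I would describe the codomain: because $C_p$ is a $p$-group, the only simple $\F_p[C_p]$-module is the trivial one $1$, so $G_0(\F_p[C_p])$ is free abelian of rank $1$, generated by $[1]$.

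Then I would compute the map on the basis. By definition $m_{\F_p,ss}([C_p]) = \Ind_{C_p/C_p}(1) = [1]$. For the other basis element, $m_{\F_p,ss}([\{e\}]) = \Ind_{C_p/\{e\}}(1)$ is the class of the regular representation $\F_p[C_p]$; since the unique simple is the trivial module and $\dim_{\F_p}\F_p[C_p]=p$, this class equals $p[1]$. Identifying $G_0(\F_p[C_p])$ with $\Z$ via $[1]\mapsto 1$, the map $m_{\F_p,ss}(C_p)$ becomes
\[
a[C_p] + b[\{e\}] \;\longmapsto\; a + pb.
\]
The kernel of this $\Z$-linear map is the rank-$1$ sublattice generated by $p[C_p] - [\{e\}]$, which is the stated relation.

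There is essentially no obstacle here: the only nontrivial input is the standard fact that $\F_p[C_p]$ has a unique simple module (the trivial one), and that composition factors of the regular representation account for its full dimension. Everything else is a direct computation, and the result could equally be seen as a rank count consistent with Lemma~\ref{lem:copriss} and Corollary~\ref{cor:injcyclic}: the rank of $b(C_p)$ is $2$ while the rank of $\Im(m_{\F_p,ss})(C_p)$ is $1$ (the number of conjugacy classes of cyclic subgroups of order coprime to $p$, namely just the trivial one).
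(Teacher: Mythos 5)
Your proposal is correct and takes essentially the same route as the paper: a direct elementary computation resting on the fact that the regular representation $\mathbb{F}_p[C_p]$ has $p$ copies of the trivial module as its composition factors. The only cosmetic difference is that you pin down $G_0(\mathbb{F}_p[C_p])\cong\mathbb{Z}$ and read the kernel off the explicit map $(a,b)\mapsto a+pb$, whereas the paper verifies the element is a relation, bounds the kernel's rank by $1$, and invokes primitivity of $p[C_p]-[\{e\}]$ (its coefficients have gcd $1$) to conclude it generates.
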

\begin{proof}
	It is easy to verify that the claimed element of $b(C_p)$ is in $\K_{\F_p,ss}(C_p)$. Furthermore, since $\Im(m_{\mathbb{F}_p,ss})\supseteq \langle 1 \rangle_\mathbb{Z}$, the kernel has rank $1$. Clearly no integral relation divides $p[C_p]-[\{e\}]$ this completes the proof. 	
\end{proof}
Having identified the coprimordial groups for $\Im(m_{\mathbb{F}_p,ss })$,  Proposition \ref{prop:primordial} states that the primordial groups for $\Im(m_{\mathbb{F}_p,ss})$ are a subclass of $q$-quasi-elementary groups with cyclic part of order coprime to $p$. 
Note that it is possible to have $p=q$.
\begin{lemma}\label{lem:primordials}
	The primordial groups for $\Im(m_{\mathbb{F}_p,ss})$, are precisely the groups $H$ such that for some prime $q$, the minimal normal subgroup  $O^q(H)$ of $H$ whose associated quotient is a $q$-group, is cyclic of order prime to $p$.
\end{lemma}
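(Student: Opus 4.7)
The statement is an ``iff'' whose two directions I establish separately.

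The ``only if'' direction is formal: if $H$ is primordial for $\cF := \Im(m_{\mathbb{F}_p,ss})$, Proposition \ref{prop:primordial}(3) places $H \in \cP(\cF \otimes \mathbb{Z}_{(q)})$ for at least one prime $q$; Proposition \ref{prop:primordial}(4) then forces $O^q(H) \in \cC(\cF)$, and Lemma \ref{lem:copriss} identifies $\cC(\cF)$ as the class of cyclic groups of order coprime to $p$.

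For the converse, assume $C := O^q(H)$ is cyclic of order coprime to $p$ for some prime $q$. Since $C$ is generated by $q'$-elements, $|C|$ is coprime to $q$, so Schur--Zassenhaus yields $H = C \rtimes Q$ with $Q$ a $q$-group; the goal is to show $1_{\cF(H)} \notin \cI_{\cF}(H)$. By transitivity of induction, $\cI_{\cF}(H)$ is the $\mathbb{Z}$-span of $\{\Ind^H_M 1 : M \lneq H\}$, since $\cF(K)$ is generated by permutation classes for every $K \le H$. My plan is to separate $1_{\cF(H)}$ from $\cI_{\cF}(H)$ using the functional $\beta_c : \cF(H) \to \mathbb{Z}$ that evaluates Brauer characters at a generator $c$ of $C$ (a $p$-regular element). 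On a permutation module $\mathbb{F}_p[H/M]$ this functional returns the number of cosets fixed by $c$, namely $|\{gM : g^{-1}cg \in M\}|$; since every conjugate $g^{-1}cg$ lies in $C$ (by normality) and has order $|C|$, such a coset exists only when $M \cap C = C$, i.e.\ $M \supseteq C$. Thus $\beta_c(\Ind^H_M 1) = 0$ for $M \not\supseteq C$, while $\beta_c(\Ind^H_M 1) = [H:M]$ for $M \supseteq C$; for proper $M$ with $M \supseteq C$ one has $M = C \rtimes Q'$ for some $Q' \lneq Q$, whence $[H:M] = [Q:Q']$ is a positive power of $q$.

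In the modular case $q = p$, this immediately gives the conclusion: $\beta_c(\cI_{\cF}(H)) \subseteq p\mathbb{Z}$ while $\beta_c(1_{\cF(H)}) = 1 \notin p\mathbb{Z}$, forcing $1_{\cF(H)} \notin \cI_{\cF}(H)$. If instead $q \neq p$, the divisibility bookkeeping fails, but $|H|$ is coprime to $p$, $\mathbb{F}_p[H]$ is semisimple, and Remark \ref{rem:inc} identifies $\ker(m_{\mathbb{F}_p,ss})(K) = \ker(m_\mathbb{Q})(K)$ for every $K \le H$; primordiality for $\cF$ therefore coincides with primordiality for $\Im(m_\mathbb{Q})$, and $q$-quasi-elementary groups are classically primordial for the latter, as a direct consequence of the sharpness of Artin's induction theorem over $\mathbb{Q}$. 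The crux of the argument is the $p$-divisibility of $[Q:Q']$ for proper $Q' \lneq Q$ in the modular case, which the Brauer-character functional $\beta_c$ converts into a clean obstruction; beyond this observation no substantive difficulty arises.
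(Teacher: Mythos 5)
Your proof is correct, and two of its three pieces coincide with the paper's: the ``only if'' direction is exactly the paper's appeal to Proposition \ref{prop:primordial} and Lemma \ref{lem:copriss}, and the $q\ne p$ case of the converse is the paper's reduction, via Remark \ref{rem:inc}, to the classical primordiality of quasi-elementary groups for $\Im(m_{\mathbb{Q}})$. Where you genuinely diverge is the $p$-quasi-elementary case. The paper splits this into two sub-cases ($H$ a $p$-group, or $H$ containing a non-trivial cyclic subgroup of order prime to $p$) and in each restricts a putative relation $[H]+\sum_{K<H}a_K[K]$ to a cyclic subgroup, invoking Corollary \ref{cor:injcyclic} (resp.\ Lemma \ref{lem:Cprel}) together with an unspelled ``direct calculation'' to force $p$ to divide the coefficient of $[H]$. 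Your functional $\beta_c$, evaluation of the Brauer character at a generator $c$ of $O^p(H)$, handles both sub-cases in one stroke: normality of $O^p(H)$ gives $\beta_c(\Ind_{H/M}(1))=0$ when $M\not\supseteq O^p(H)$ and $=[H:M]\in p\mathbb{Z}$ when $M\supseteq O^p(H)$ is proper, and when $O^p(H)$ is trivial this degenerates gracefully to the dimension count that settles the $p$-group case. This is a cleaner and more uniform obstruction, and it makes explicit the divisibility that the paper leaves to calculation; the paper's version, by contrast, needs no character theory beyond Corollary \ref{cor:injcyclic}. One cosmetic quibble: the primordiality of non-cyclic $q$-quasi-elementary groups for $\Im(m_{\mathbb{Q}})$ is the sharpness of Solomon's induction theorem (as the paper cites), not of Artin's, whose conclusion only holds after tensoring with $\mathbb{Q}$.
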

\begin{proof}
	As previously stated it is a consequence of Lemma \ref{lem:copriss} and Proposition \ref{prop:primordial} that every primordial group is of this form.	In the case $p\nmid \#H$ the observation in Remark \ref{rem:inc} shows that $H$ is primordial for $\Im(m_{\mathbb{F}_p,ss})$ if and only if it is for $\Im(m_{\mathbb{Q}})$.  Theorem \ref{thm:sol} and the main result of \cite{Solomon} show that quasi-elementary groups are primordial for $\Im(m_{\mathbb{Q}})$.
	\\
	The remaining case to consider it when $H$ is $p$-quasi-elementary. If $H$ were not primordial for $\Im(m_{\mathbb{F}_p,ss})$, then in particular $1_{G_0(\mathbb{F}_p[H])}=1_{\Im(m_{\mathbb{F}_p,ss})(H)}\in \cI_{\Im(m_{\mathbb{F}_p,ss})}(H)$. It follows that for non-primordial groups there is a non-zero element $[H]+\sum_{K<H}[K]$ in $\K_{\F_p,ss}(H)$.   Consider the following two cases, the first where $H$ is not a $p$-group and the  second case where $H$ is a $p$ group.
	\begin{enumerate}
		\item If $H$ has a non-trivial coprime to $p$ cyclic subgroup $C$ then restriction of any element of $\K_{\F_p,ss}(H)$ to this subgroup must vanish by Corollary \ref{cor:injcyclic}. Since $p\mid [H:C]$ it follows from direct calculation that any relation must have coefficient of $1_{G_0(\mathbb{F}_p[H])}$ divisible by $p$. Since a finite group is primordial for a Green functor $\cF$ if and only if  $1_{\cF}$ is not in the image of proper inductions it follows that in this case $H$ must be primordial.
		\item Otherwise $H$ is a $p$-group and upon restriction to a central cyclic subgroup of order $p$ any relation must be of the form $a(p[C_p]-[e])$ by Lemma \ref{lem:Cprel}. Direct calculation shows that for any $K$ such that $C_p<K<H$ the restriction of $[K]$ to $b(C_p)$ is $[K:C_p][C_p]$. By an identical argument to the previous case, it follows the coefficient of $1_{G_0(\mathbb{F}_p[H])}$ is divisible by $p$. This completes the proof. 
	\end{enumerate}

\end{proof}
Thus, the primordial groups for $\Im(m_{\mathbb{F}_p , ss})$ are the set of quasi-elementary groups, for which the cyclic part $C$ is coprime to $p$.
\par
This may be phrased as an induction theorem.
\begin{corollary}\label{cor:ssind}
	Let $G$ be a finite group and let $p$ be a prime. Let $T$ be the set of conjugacy classes of primordial subgroups of $G$ for $\Im(m_{\mathbb{F}_p , ss})$, that is subgroups which are quasi-elementary with cyclic part coprime to $p$. Then:
	$$1_{G_0(\mathbb{F}_p[G])}=\sum_{H\in T} a_H \Ind_{G/H}(1_{G_0(\mathbb{F}_p[H])})$$
	where $a_H$ are integers.
\end{corollary}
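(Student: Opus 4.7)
The plan is to induct on $|G|$, using the definition of primordial groups and the fact that $\Im(m_{\mathbb{F}_p,ss})(H)$ is generated as a $\mathbb{Z}$-module by elements of the form $\Ind_{H/K}(1_{G_0(\mathbb{F}_p[K])})$ for $K \le H$. The base case $G = \{e\}$ is trivial, since the trivial group is itself primordial and $1$ is the empty-induction identity.

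For the inductive step, if $G$ is primordial then we simply take $a_G = 1$ and all other coefficients zero. Otherwise, by Definition \ref{def:primordial}, $\cF(G) = \cI_\cF(G)$ for $\cF = \Im(m_{\mathbb{F}_p,ss})$, so in particular $1_{G_0(\mathbb{F}_p[G])}$ lies in $\cI_\cF(G) = \sum_{H \lneq G} \Ind_{G/H}\cF(H)$. This expresses
\[ 1_{G_0(\mathbb{F}_p[G])} = \sum_{H \lneq G} \Ind_{G/H}(x_H), \qquad x_H \in \Im(m_{\mathbb{F}_p,ss})(H). \]
Each $x_H$ is by definition of the image a $\mathbb{Z}$-linear combination of elements $\Ind_{H/K}(1_{G_0(\mathbb{F}_p[K])})$ with $K \le H$, so by transitivity of induction (axiom (MFI 1)) we may rewrite $1_{G_0(\mathbb{F}_p[G])}$ as an integer linear combination of $\Ind_{G/K}(1_{G_0(\mathbb{F}_p[K])})$ ranging over proper subgroups $K \lneq G$.

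Now for each such proper $K$ the inductive hypothesis applies: $1_{G_0(\mathbb{F}_p[K])}$ is an integer linear combination of $\Ind_{K/H'}(1_{G_0(\mathbb{F}_p[H'])})$ with $H'$ running over primordial subgroups of $K$. Crucially, the notion of being primordial for $\Im(m_{\mathbb{F}_p,ss})$ is an intrinsic property of the isomorphism class of the group (not of its embedding), so by Lemma \ref{lem:primordials} any primordial subgroup of $K$ is also a primordial subgroup of $G$. Applying transitivity of induction once more yields the desired expression for $1_{G_0(\mathbb{F}_p[G])}$ as an integer linear combination of $\Ind_{G/H}(1_{G_0(\mathbb{F}_p[H])})$ with $H \in T$. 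The only subtle point, which is why the coefficients remain integral rather than merely rational as in Theorem \ref{thm:Artin}, is that $\cF(G) = \cI_\cF(G)$ holds already over $\mathbb{Z}$ by the definition of primordiality; there is no need to clear denominators, and so no real obstacle beyond the bookkeeping.
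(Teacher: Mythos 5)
Your proof is correct and is exactly the standard induction-on-$|G|$ argument that the paper leaves implicit when it presents this statement as an immediate corollary of Lemma \ref{lem:primordials}: non-primordial means $\cF(G)=\cI_{\cF}(G)$ over $\mathbb{Z}$, every element of $\Im(m_{\mathbb{F}_p,ss})(H)$ is an integral combination of $\Ind_{H/K}(1)$, and transitivity of induction plus the inductive hypothesis finishes the job with integral coefficients throughout. No discrepancy with the paper's (unstated) reasoning.
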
						
\section{Classification of $\Prim_{\K_{\F_p,ss}}(G)$ for Soluble $G$} \label{sec:ssclass}
In this section we give a necessary condition on a finite group $G$ for $\Prim_{\K_{\F_p,ss}}(G)$ to be non-trivial, and explicitly write down all such groups in the soluble case. The main results are summarised in the following theorem:
\par
\begin{theorem}\label{thm:main} Let $p$ be a prime, and $G$ be a finite group of order divisible by $p$. All Brauer relations for $G_0(\mathbb{F}_p[G])$ are linear combinations of relations induced and inflated from subquotients of the following forms:
	\begin{enumerate}
		\item a cyclic group $C_p$ of order $p$,
		\item non-cyclic $q$-quasi-elementary groups with order coprime to $p$,
		\item $(C_l\rtimes C_{q^r})\times (C_l\rtimes C_{q^s})$ for primes $q,l$ with $l\ne p$ and the action faithful,
		\item an extension $1 \rar S^d \rar E \rar H \rar 1$ with $S$ simple, $d$ a positive integer, $H=C\rtimes Q$ a quasi-elementary group whose cyclic part $C$ is of order coprime to $p$, and $S^d$ is a unique minimal normal subgroup of $E$.
	\end{enumerate}

\end{theorem}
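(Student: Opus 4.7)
The plan is to combine the induction theorem of Corollary \ref{cor:ssind}, the characterisation of primordial groups in Lemma \ref{lem:primordials}, and the known classification of primitive $\mathbb{Q}$-Brauer relations of Bartel--Dokchitser (accessed via Remark \ref{rem:inc}). The natural split is according to whether $p\mid |G|$.

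When $p\nmid |G|$, Remark \ref{rem:inc} identifies $\K_{\mathbb{F}_p,ss}(G)$ with $\ker(m_\mathbb{Q})(G)$, so the Bartel--Dokchitser classification of primitive characteristic-zero relations immediately yields exactly types (2), (3), and (4) as sources of primitive contributions; type (1) does not appear for order reasons. When $p\mid |G|$, the seed modular relation $p[C_p]-[\{e\}]$ of type (1) is provided by Lemma \ref{lem:Cprel}. For a general such $G$, the strategy is to take $\rho\in\K_{\mathbb{F}_p,ss}(G)$, apply Corollary \ref{cor:ssind} together with Frobenius reciprocity to decompose, modulo $\Imprim(G)$, an expression of the form $\rho\equiv\sum_H a_H\Ind_{G/H}(\Res_{G/H}(\rho))$ where $H$ runs over primordial subgroups (quasi-elementary with cyclic part of order coprime to $p$). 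This reduces the study of $\Prim_{\K_{\mathbb{F}_p,ss}}(G)$ to understanding the relations on primordial subgroups together with the extension data relating $G$ to such subgroups.

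Primitive contributions then arise either from primitive relations already living on a primordial $H$ itself (types (2), (3), and the soluble case of (4)) or from the modular $p$-phenomena detected via inflation from quotients of $G$ whose unique minimal normal subgroup is of the appropriate form (type (1), possibly combined with the extension structure of (4) in the more elaborate cases). The classification of soluble $G$ for which $\Prim_{\K_{\mathbb{F}_p,ss}}(G)$ is non-trivial would be completed by a case-by-case analysis of the quasi-elementary primordial subgroups and their possible extensions, lifting explicit relations in $C_p$ and in the $\mathbb{Q}$-primordial groups via inflation and Frobenius reciprocity.

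The main obstacle is the converse direction: showing that whenever $G$ has no subquotient of any of the four listed types, every element of $\K_{\mathbb{F}_p,ss}(G)$ actually lies in $\Imprim(G)$. This requires a delicate case analysis using the Mackey condition (MFI 5) and the minimality of $O^q(H)$ from Lemma \ref{lem:primordials}, likely combined with induction on $|G|$ and a normal series argument exploiting inflation from proper quotients to kill the $p$-divisible part of relations. The soluble case of (4), treated constructively, is more tractable because abelian composition factors admit explicit descent via the structure $H=C\rtimes Q$; the non-soluble case of (4), where $S$ is a non-abelian simple group, forces a purely existential argument based on the rigidity imposed by the unique-minimal-normal-subgroup hypothesis.
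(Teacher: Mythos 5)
Your general framework --- coprimordial groups are the cyclic groups of order coprime to $p$ (Artin induction), primordial groups are the quasi-elementary groups with cyclic part coprime to $p$, decompose $\rho=\sum_H a_H\Ind_{G/H}(\Res_{G/H}(\rho))$ via Frobenius reciprocity, and compare with the characteristic-zero classification through Remark \ref{rem:inc} --- is exactly the paper's starting point. But the two steps that actually carry the theorem are missing, and the second of the two is fatal to the plan as written.

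First, the Frobenius reciprocity decomposition only places $\rho$ in $\Imprim(G)$ when $G$ is \emph{not} primordial, i.e.\ when $1_{G_0(\mathbb{F}_p[G])}$ is a $\mathbb{Z}$-combination of inductions from proper subgroups. The primordial groups for $\Im(m_{\mathbb{F}_p,ss})$ include every $p$-quasi-elementary group, in particular every $p$-group, and almost none of these appear on the list: $C_p\times C_p$, for instance, is primordial, has non-trivial $\K_{\F_p,ss}$, and is of none of the forms (1)--(4) (it has no unique minimal normal subgroup, so it is not of form (4)). If $\Prim_{\K_{\F_p,ss}}(C_p\times C_p)$ were non-zero the theorem would be false, so you must prove it vanishes, and your main tool is unavailable precisely there. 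The paper's Theorem \ref{thm:noprimss} handles all $p$-quasi-elementary groups other than $C_p$ by exhibiting an explicit full-rank \emph{saturated} sublattice of imprimitive relations built from the relations $p[C_s\rtimes P_{i,j}]-[C_s\rtimes P_{i-1,k}]$ inflated from $C_p$-quotients; nothing in your proposal supplies a substitute, and your phrase ``modular $p$-phenomena detected via inflation'' does not engage with the saturation issue (full rank alone is not enough; one needs the sublattice to be saturated to conclude every relation is imprimitive).

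Second, the structural reduction is asserted rather than proved. The paper first shows (Lemma \ref{lem:SES}, resting on the quotient criterion of \cite[Theorem 4.7]{GFI}) that a group with non-trivial primitive quotient is an extension $1\rar S^d\rar G\rar H\rar 1$ with \emph{every} proper quotient quasi-elementary with cyclic part coprime to $p$, and then runs a lengthy case analysis (Theorem \ref{thm:sol}, using the splitting criterion of Lemma \ref{lem:SESsplit}, and Corollary \ref{cor:nonsol}) to force faithfulness and irreducibility of the action, which is what produces the precise forms (3) and (4) and the unique-minimal-normal-subgroup condition. Your proposal defers all of this to ``a delicate case analysis using the Mackey condition and induction on $|G|$'' without identifying the quotient criterion that drives it. As it stands the proposal is a plausible outline of the paper's strategy, not a proof.
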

\par
 The first step is to give a description of $\Prim_{\K_{\F_p,ss}}(G)$ based on the proper quotients of $G$ for any non-primordial group $G$.
\begin{lemma}\label{lem:SES}
	Let $G$ be a finite group which admits primitive relations over $\mathbb{F}_{p},ss$ then $G$ is an extension of the following form:
	\begin{align}\label{eqn:SESss}
	1\rar S^d \rar G \rar H \rar 1 
	\end{align}
	where $S$ is a finite simple group, $d\ge 1$, $H$ is  quasi-elementary with cyclic part coprime to $p$. 
	Furthermore, if $G$ is not primordial for $\Im(m_{\mathbb{F}_p,ss})$ then $\Prim_{\K_{\F_p,ss}}(G)$ is as follows:
	\begin{enumerate}
		\item if all quotients of $G$ are cyclic of order coprime to $p$, then $\Prim_{\K_{\F_p,ss}}(G)$ is isomorphic to $\mathbb{Z}$, 
		\item if all quotients of $G$ are $q$-quasi-elementary with cyclic part coprime to $p$ and at least one of them is not cyclic of order coprime to $p$, then $\Prim_{\K_{\F_p,ss}}(G)=\mathbb{Z}/{q\mathbb{Z}}$,
		\item otherwise it is trivial. 
	\end{enumerate}
	Furthermore, in all cases $Prim(G)$ is generated by any Brauer relation of the form $[G]+\sum_{H<G}n_H [H]$.
\end{lemma}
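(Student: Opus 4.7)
The plan is to produce a canonical generator of $\Prim_{\K_{\F_p,ss}}(G)$ with $[G]$-coefficient $1$ and then identify its order by projecting $\Imprim(G)$ onto the $[G]$-coefficient. Let $c\colon b(G)\to\mathbb{Z}$ denote the coefficient of $[G]$; one checks that $c$ is a ring homomorphism (since in $b(G)$ the product $[K][L]$ contains $[G]$ only when $K=L=G$), that $c$ vanishes on every $\Ind_{G/H}(b(H))$ for $H<G$, and that $c(\Inf_{G/N}\eta)$ equals the $[G/N]$-coefficient of $\eta$ because $\Inf_{G/N}([G/N])=[G]$. When $G$ is not primordial, Corollary~\ref{cor:ssind} yields integers $a_H$ over primordial proper $H<G$ for which
$$\theta_0:=[G]-\sum_{H<G\,\mathrm{prim.}}a_H[H]\in\K_{\F_p,ss}(G)$$
has $c(\theta_0)=1$.

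The first key step is cyclicity of $\Prim_{\K_{\F_p,ss}}(G)$ on $[\theta_0]$: for any $\theta\in\K_{\F_p,ss}(G)$ the difference $\phi:=\theta-c(\theta)\theta_0$ is a relation with $c(\phi)=0$, hence an element of $\K_{\F_p,ss}(G)\cap\sum_{H<G}\Ind_{G/H}b(H)$. Showing that this intersection coincides with $\sum_{H<G}\Ind_{G/H}\K_{\F_p,ss}(H)$ uses Frobenius reciprocity $\phi\cdot\Ind_{G/H}(1_{b(H)})=\Ind_{G/H}(\Res_{G/H}\phi)$ and the fact that restriction preserves Brauer relations; this is exactly the sort of conclusion the general GFI machinery of \cite{GFI} is designed for, and the paper has already verified the relevant Assumption~4.8 for $m_{\mathbb{F}_p,ss}$. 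Hence $\phi\in\Imprim(G)$ and $\theta\equiv c(\theta)\theta_0\pmod{\Imprim(G)}$.

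The second step computes the order: by the observations on $c$ we have $c(\Imprim(G))=I\subseteq\mathbb{Z}$ with $I$ generated by the $[G/N]$-coefficients of elements of $\K_{\F_p,ss}(G/N)$ for $N\triangleleft G$ non-trivial, so $\Prim_{\K_{\F_p,ss}}(G)\cong\mathbb{Z}/I$. If some proper quotient $G/N$ is non-primordial, applying the construction of $\theta_0$ to $G/N$ produces $1\in I$ and $\Prim=0$. So non-triviality of $\Prim_{\K_{\F_p,ss}}(G)$ forces every proper quotient of $G$ to be primordial, i.e.\ quasi-elementary with cyclic part coprime to $p$ by Lemma~\ref{lem:primordials}. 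Choosing any minimal normal subgroup $N=S^d$ (such a subgroup exists and has this form in any finite group) realises $G$ as an extension $1\to S^d\to G\to H\to 1$ of the claimed form. The three cases then emerge from direct analysis of $I$: in (1), Corollary~\ref{cor:injcyclic} gives $\K_{\F_p,ss}(G/N)=0$, so $I=0$ and $\Prim\cong\mathbb{Z}$; in (2), one shows $I=q\mathbb{Z}$ for the common prime $q$; in (3), distinct primes $q_1,q_2$ arise and $\gcd(q_1,q_2)=1$ forces $I=\mathbb{Z}$.

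The principal obstacle is the divisibility claim in case (2): for every primordial $q$-quasi-elementary group $H$ with cyclic part coprime to $p$ that is not cyclic of order coprime to $p$, every $[H]$-coefficient in $\K_{\F_p,ss}(H)$ lies in $q\mathbb{Z}$, with $q$ attained. When $q\ne p$ this reduces via Remark~\ref{rem:inc} to the classical Artin-index statement for quasi-elementary groups over $\mathbb{Q}$; when $q=p$ one adapts the argument in Lemma~\ref{lem:primordials} by restricting any relation in $\K_{\F_p,ss}(H)$ to a central $C_p\le H$ and applying Lemma~\ref{lem:Cprel} to extract the factor of $p$. The final assertion is then immediate: any Brauer relation of the form $[G]+\sum_{H<G}n_H[H]$ has $c=1$, hence represents the same class as $\theta_0$ in $\Prim_{\K_{\F_p,ss}}(G)$ by the cyclicity established above, and therefore generates.
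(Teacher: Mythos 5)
Your proposal is correct in substance and reaches the same conclusion from the same inputs (Lemma~\ref{lem:primordials}, Corollary~\ref{cor:ssind}, Solomon's theorem via Remark~\ref{rem:inc}, and Lemma~\ref{lem:Cprel}); the difference is that where the paper outsources the structural work to \cite[Theorem 4.7, Corollaries 4.10--4.12]{GFI}, you re-derive it directly via the coefficient homomorphism $c$, obtaining the clean description $\Prim_{\K_{\F_p,ss}}(G)\cong\mathbb{Z}/c(\Imprim(G))$ with $c(\Imprim(G))=\sum_{N\ne 1}\bar c\bigl(\K_{\F_p,ss}(G/N)\bigr)$. That buys a self-contained proof at the cost of two steps you leave thin. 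First, the identity $\K_{\F_p,ss}(G)\cap\sum_{H<G}\Ind_{G/H}b(H)=\sum_{H<G}\Ind_{G/H}\K_{\F_p,ss}(H)$ is not automatic from Frobenius reciprocity alone; the argument that makes it work is to multiply by the identity $[G/G]=\theta_0+\sum_H a_H[G/H]$ of $b(G)$, so that for $\phi$ with $c(\phi)=0$ one gets $\phi=\phi\cdot\theta_0+\sum_H a_H\Ind_{G/H}(\Res_{G/H}\phi)$ and then expands $\phi\cdot\theta_0=\sum_{K<G}b_K\Ind_{G/K}(\Res_{G/K}\theta_0)$, every term landing in $\sum\Ind\K$; this uses the non-primordiality of $G$ and should be said explicitly, since it is the whole content of the cyclicity claim. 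Second, in the $q=p$ divisibility step a central $C_p$ need not exist (e.g.\ $C_5\rtimes C_4$ with faithful action for $p=2$); the correct adaptation of Lemma~\ref{lem:primordials} restricts to the non-trivial coprime-to-$p$ cyclic part when there is one (using Corollary~\ref{cor:injcyclic}) and to a central $C_p$ only in the $p$-group case, and one should also note that $p$ is attained in $I$ by inflating $p[C_p]-[\{e\}]$ from a $C_p$-quotient. With those two points filled in, and the trivial remark that a primordial $G$ is itself quasi-elementary with cyclic part coprime to $p$ and hence of the form \eqref{eqn:SESss}, your argument is a complete and somewhat more transparent substitute for the citations.
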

\begin{proof}
	  The primordial groups for $\Im(m_{\mathbb{F}_p,ss})$  are identified in Lemma \ref{lem:primordials}, while Corollary \ref{cor:ssind} shows that there exists a Brauer relation of the form $[G]+\sum_{H<G}n_H [H]$ for non-primordial $G$. It then follows from \cite[Theorem 4.7]{GFI} that $G$ is an extension of the claimed form, and that, for such extensions which are non-primordial, any relation $[G]+\sum_{H<G}n_H [H]$ must generate $\Prim_{\K_{\F_p,ss}}(G)$.
	   \\
	If all proper quotients are cyclic of order coprime to $p$ then they are coprimordial for $\Im(m_{\mathbb{F}_p,ss})$ and so primordial for $\Im(m_{\mathbb{F}_p,ss})_\mathbb{Q}$ by Proposition \ref{prop:primordial}.  \cite[Corollary 4.10]{GFI} then shows that $\Prim_{\K_{\F_p,ss}}(G)=\mathbb{Z}$ in this case.\\
	It remains to consider the case where there exists a quotient which is $q$-quasi-elementary with cyclic part prime to $p$ but not cyclic of order prime to $p$. First note that for non-cyclic $q$-quasi-elementary groups \cite[Theorem 1]{Solomon} shows that there exists a relation over $\mathbb{Q}$ of the form $qG+\sum_{H\lneq G}a_H [H]$ with the $a_H$ in $\mathbb{Z}$ and hence by Remark \ref{rem:inc} over $G_0(\mathbb{F}_p[-])$. It follows from Lemma \ref{lem:primordials} that the coefficient of  $G$ in any relation is not $1$. Furthermore, inflating the relation in Lemma \ref{lem:Cprel}, shows that there is a relation $p[C_{p^r}]-[C_{p^{r-1}}]$ for any $C_{p^r}$.  Corollaries 4.11 and 4.12 of \cite[Theorem 4.9]{GFI} then give the claimed result.
\end{proof}
We now classify which groups $G$ of the form \eqref{eqn:SESss} have $\Prim_{\K_{\F_p,ss}}(G)$ non-trivial.
\begin{corollary}\label{cor:nonsol}
	Let $G$ be a finite insoluble group which admits a primitive relation over $\mathbb{F}_{p},ss$. Then $G$ is of the form described in Lemma \ref{lem:SES} with $S$ non-cyclic, $H$ injects into $\Out(S^d)$ and no proper non-trivial subgroup of $S^d$ is normal in $G$. Furthermore, every such group admits a primitive relation.
\end{corollary}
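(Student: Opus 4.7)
The plan is to apply Lemma~\ref{lem:SES} as the main engine. Since $G$ is insoluble it is not primordial for $\Im(m_{\F_p,ss})$: primordials are quasi-elementary with cyclic part coprime to $p$ by Lemma~\ref{lem:primordials}, hence soluble. Thus $G$ admitting a primitive relation means we sit in case~(1) or~(2) of Lemma~\ref{lem:SES}, and consequently every proper quotient of $G$ is quasi-elementary with cyclic part coprime to $p$, and in particular soluble. This all-proper-quotients-soluble property is the structural input driving both directions of the argument.

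For the forward direction I would first establish that $G$ has a unique minimal normal subgroup. If $N_1,N_2$ were two distinct minimal normal subgroups then $N_1\cap N_2=1$ and $G$ would embed into $G/N_1\times G/N_2$, a soluble group, contradicting insolubility. Let $M$ denote the unique minimal normal subgroup; being characteristically simple, $M=S^d$ for some simple $S$ and $d\ge 1$, and $M$ must be insoluble (else both $M$ and $G/M$ would be soluble), so $S$ is non-abelian simple, giving that $S$ is non-cyclic. Setting $H=G/M$ produces a proper quotient of $G$, hence a quasi-elementary group with cyclic part coprime to $p$, so $G$ fits the extension shape of Lemma~\ref{lem:SES}. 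The minimality of $M$ is exactly the condition that no proper non-trivial subgroup of $S^d$ is normal in $G$. For $H\hookrightarrow\Out(S^d)$, note that $C_G(S^d)$ is normal in $G$ and intersects $S^d$ trivially (since $Z(S^d)=1$ because $S$ is non-abelian simple); hence $C_G(S^d)$ cannot contain the unique minimal normal $M=S^d$ and must be trivial, so $G\to\Aut(S^d)$ is injective, the image of $S^d$ equals $\Inn(S^d)$, and $H=G/S^d\hookrightarrow\Out(S^d)$.

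For the converse, suppose $G$ is of the stated form. The condition that no proper non-trivial subgroup of $S^d$ is normal in $G$ makes $S^d$ a minimal normal subgroup, and the injection $H\hookrightarrow\Out(S^d)$ forces $C_G(S^d)=1$; together these imply $S^d$ is the unique minimal normal subgroup of $G$, since any other minimal normal $N$ would satisfy $N\cap S^d=1$ and hence $N\subseteq C_G(S^d)=1$. Every non-trivial normal subgroup of $G$ therefore contains $S^d$, so every proper quotient of $G$ factors through $H$ and is quasi-elementary with cyclic part coprime to $p$. Since $S$ is non-abelian simple, $G$ is insoluble and hence not primordial, so Lemma~\ref{lem:SES}, case~(1) or~(2), delivers $\Prim_{\K_{\F_p,ss}}(G)\ne 0$.

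The main obstacle I foresee is isolating the unique minimal normal subgroup in the forward direction: once this is done, both the claimed structure of the extension and the converse follow essentially formally from socle theory combined with Lemma~\ref{lem:SES}. The uniqueness hinges crucially on the fact that cases~(1) and~(2) of Lemma~\ref{lem:SES} force all proper quotients of $G$ to be soluble, which is where the primitivity of the relation interacts with the group-theoretic structure of $G$.
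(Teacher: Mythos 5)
Your proposal is correct and follows essentially the same route as the paper: both arguments reduce the corollary to the observation that, by Lemma~\ref{lem:SES}, a primitive relation forces every proper quotient of $G$ to be quasi-elementary with cyclic part coprime to $p$, and then translate this into the conditions that $S^d$ is the unique minimal normal subgroup (so $S$ is non-abelian), the conjugation action is faithful, and hence $H\hookrightarrow\Out(S^d)$ since $Z(S^d)=1$. The paper states this equivalence in one line; you have simply supplied the standard socle-theoretic details.
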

\begin{proof}
	The corollary follows immediately from Lemma \ref{lem:SES} upon noting that for  such an extension $G$ every quotient is quasi-elementary with cyclic part coprime to $p$ if and only if the action of $G$ on $S^d$ is faithful and no proper non-trivial subgroup of $S^d$ is normal in $G$. Since the centre of $S^d$ is trivial the action of $G$ is faithful if and only if $G/(S^d)=H\hookrightarrow \Out(S^d)$.
\end{proof}
Since the inclusion in Remark \ref{rem:inc} is an equality when we restrict to groups of order coprime to $p$ and since there is a full classification of Brauer relations in characteristic zero in \cite{BD} we now need only consider  $G$ whose order is divisible by $p$. Furthermore, in light of Corollary \ref{cor:nonsol} we restrict to the soluble case.
We will make repeated use of the following result.
\begin{lemma}\label{lem:SESsplit}
	Let $G$ be a finite group, and let $W$ an abelian normal subgroup
	with quotient $H$. Suppose that there exists a normal subgroup $K$ of $H$
	such that $\gcd(\#K,\#W)=1$ and such that no non-identity element
	of $W$ is fixed under the natural conjugation action of $K$ on $W$.
	Then $G\cong W\rtimes H$.
\end{lemma}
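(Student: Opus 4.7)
The plan is to use the Schur--Zassenhaus theorem together with a Frattini-style argument. First I would let $\tilde{K}$ denote the preimage of $K$ under the quotient map $G\rar H$, so that $\tilde{K}\triangleleft G$ and we have an extension
\[
1\rar W\rar \tilde{K}\rar K\rar 1.
\]
Since $W$ is abelian and $\gcd(\#W,\#K)=1$, Schur--Zassenhaus applies and yields a complement $K_0\le \tilde{K}$ to $W$, with $K_0\cong K$; moreover, any two complements to $W$ in $\tilde{K}$ are conjugate by an element of $W$.

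Next I would argue that $N_G(K_0)$ is the desired complement to $W$ in $G$. For $G=W\cdot N_G(K_0)$, observe that for any $g\in G$, the subgroup $gK_0g^{-1}$ lies in $\tilde{K}$ (because $\tilde{K}$ is normal in $G$) and is again a complement to $W$ in $\tilde{K}$; by the uniqueness part of Schur--Zassenhaus, there exists $w\in W$ with $gK_0g^{-1}=wK_0w^{-1}$, hence $w^{-1}g\in N_G(K_0)$, which gives $g\in W\cdot N_G(K_0)$.

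For $W\cap N_G(K_0)=\{1\}$, suppose $w\in W$ normalises $K_0$. Then for every $k_0\in K_0$ the commutator $[w,k_0]=(wk_0w^{-1})k_0^{-1}$ lies in $K_0$, while also $[w,k_0]=w(k_0w^{-1}k_0^{-1})\in W$ by normality of $W$; hence $[w,k_0]\in W\cap K_0=\{1\}$, so $w$ is centralised by $K_0$. Since the conjugation action of $K$ on $W$ is induced from that of $K_0$ (the $W$-ambiguity acts trivially because $W$ is abelian), $w$ is $K$-fixed, forcing $w=1$ by hypothesis. Putting this together, $N_G(K_0)$ projects isomorphically onto $H$ and intersects $W$ trivially, whence $G=W\rtimes N_G(K_0)\cong W\rtimes H$.

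The only non-routine point is verifying the commutator computation that underlies $W\cap N_G(K_0)=\{1\}$ and translating the hypothesis ``no non-identity element of $W$ is fixed by $K$'' into the assertion $C_W(K_0)=\{1\}$; everything else is a standard application of Schur--Zassenhaus and a Frattini argument.
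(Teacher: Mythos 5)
Your proof is correct, but it takes a genuinely different route from the paper. The paper works cohomologically: viewing $W$ as an $H$-module, it invokes the inflation--restriction (Hochschild--Serre) exact sequence $H^2(H/K,W^K)\rar H^2(H,W)\rar H^2(K,W)$, kills the last term by coprimality and the first term because $W^K=1$, and concludes $H^2(H,W)=0$, so the extension splits. You instead argue group-theoretically: you split the subextension $1\rar W\rar \tilde K\rar K\rar 1$ by Schur--Zassenhaus, run a Frattini argument to get $G=W\cdot N_G(K_0)$, and use the commutator computation together with the fixed-point-free hypothesis to show $N_G(K_0)\cap W=1$. All the steps check out: the conjugacy of complements by an element of $W$ follows from $G=WK_0$, the identification $C_W(K_0)=W^K$ is justified exactly as you say because $W$ is abelian, and $N_G(K_0)$ is then a genuine complement isomorphic to $H$. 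The trade-off is that the paper's argument is two lines given the spectral sequence (and proves the stronger statement that \emph{every} extension with this action splits, i.e.\ $H^2(H,W)=0$), whereas yours is elementary and self-contained modulo Schur--Zassenhaus for an abelian kernel, which itself only needs $H^1$ and $H^2$ of coprime groups; either is a complete proof of the lemma as stated.
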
 
\begin{proof}
	We may view $W$ as a module under $H$.
	Since $K$ and $W$ have coprime orders, the cohomology group
	$H^i(K,W)$ vanishes for $i> 0$, so the Hochschild--Serre spectral sequence \cite[Theorem 6.3]{Brown}
	gives an exact sequence
	$$
	H^2(H/K,W^K)\rightarrow H^2(H,W)\rightarrow H^2(K,W).
	$$
	The last term in this sequence also vanishes by the coprimality assumption,
	while the first term vanishes, since $W^K$ is assumed to be trivial.
	So $H^2(H,W)=0$, and so the extension $G$ of $H$ by $W$ splits.
\end{proof}
\begin{theorem}\label{thm:sol} Suppose that $G$ is a soluble group, of order divisible by $p$, and $\Prim_{\K_{\F_p,ss}}(G)$ is non-trivial. Then $G$ is of one of the following forms:
	\begin{enumerate}
		\item a $p$-group or,
		\item a $p$-quasi-elementary group or,
		\item $(C_l)^d\rtimes H$ with $l$ a prime, $H$ quasi-elementary, with cyclic part coprime to $p$, acting faithfully and irreducibly on $(C_l)^d$ or,
		\item  $(C_l\rtimes C_{p^r})\times (C_l\rtimes C_{p^s})$  with faithful action and $l$ a prime.
	\end{enumerate}
\end{theorem}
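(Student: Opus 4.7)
The strategy is to start from the extension theorem provided by Lemma \ref{lem:SES} and use the constraint that every proper quotient of $G$ is quasi-elementary with cyclic part coprime to $p$ to extract the structural classification.

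First I would handle the primordial case. By Lemma \ref{lem:SES}, $G$ fits into an extension $1 \to S^d \to G \to H \to 1$ with $S$ simple, and by solubility $S = C_l$ for a prime $l$. If $G$ itself is primordial for $\Im(m_{\mathbb{F}_p,ss})$, then Lemma \ref{lem:primordials} shows $G$ is $q$-quasi-elementary with cyclic part coprime to $p$; the hypothesis $p\mid |G|$ then forces $q = p$, giving cases (1) and (2) of the theorem. Henceforth I assume $G$ is non-primordial, so Lemma \ref{lem:SES}(2) applies and every proper quotient of $G$ is quasi-elementary with cyclic part coprime to $p$; in particular $H = C \rtimes Q$ with $Q$ a Sylow $q$-subgroup and $C$ cyclic of order coprime to $p$.

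Next I would show the extension splits, giving $G \cong (C_l)^d \rtimes H$, and that $H$ acts faithfully on $(C_l)^d$. For splitting I would apply Lemma \ref{lem:SESsplit}; the required normal subgroup of $H$ of order coprime to $l$ acting without fixed points on $(C_l)^d$ is produced by combining the quotient constraint with the observation that any non-trivial fixed subspace would itself be $G$-normal and give a proper quotient incompatible with the quasi-elementary-with-cyclic-part-coprime-to-$p$ condition. Faithfulness follows from primitivity: if $K \le H$ is the kernel of the action on $(C_l)^d$ then $K \triangleleft G$, the quotient $G/K \cong (C_l)^d \rtimes (H/K)$ carries the same Brauer relation, and the relation on $G$ would then be inflated from $G/K$, contradicting primitivity unless $K = 1$.

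Finally I would analyse the $\mathbb{F}_l[H]$-module structure of $(C_l)^d$. If the action is irreducible then $G$ has exactly the shape of case (3). If the action is reducible, decompose $(C_l)^d = M_1\oplus\cdots\oplus M_k$ into irreducible $H$-submodules; each $M_i$ is $G$-normal, so $G/M_i$ must be quasi-elementary with cyclic part coprime to $p$. A case analysis on the possible primes serving as the quasi-elementary prime of each $G/M_i$, and on the compatible shapes of its cyclic and Sylow parts, should force $k = 2$, each $M_i \cong C_l$, $H$ to be a $p$-group with $C = 1$ and $q = p$, and the kernels $K_1, K_2$ of the two summand actions to satisfy $K_1 \cap K_2 = 1$ and $K_1 K_2 = H$, so that $H = K_1 \times K_2$ with each $H/K_i$ cyclic of $p$-power order; this recovers case (4). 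The principal obstacle is precisely this reducible analysis, in which one must simultaneously rule out higher-dimensional irreducible summands and exclude a non-trivial coprime-to-$p$ cyclic part $C$ in $H$, both of which require careful manipulation of the quasi-elementary constraint across several quotients at once.
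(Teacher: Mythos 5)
Your overall architecture is the same as the paper's: use Lemma \ref{lem:SES} to reduce to an extension $1\to (C_l)^d\to G\to H\to 1$ in which every proper quotient of $G$ is quasi-elementary with cyclic part coprime to $p$, split the extension, then analyse faithfulness and irreducibility. However, your faithfulness step is a non sequitur. If $K\le H$ is the kernel of the action on $W=(C_l)^d$, the mere existence of $K\ne 1$ does not make a relation of $G$ inflated from $G/K$: inflation from $G/K$ only produces relations supported on subgroups containing $K$, and a primitive relation $[G]+\sum_U n_U[U]$ has no reason to be of that shape, so "the relation on $G$ would then be inflated from $G/K$" is unjustified. The correct argument --- the one the paper uses, and the one you already invoke elsewhere --- is the quotient constraint: non-triviality of $\Prim_{\K_{\F_p,ss}}(G)$ forces $G/K$ to be $q$-quasi-elementary with cyclic part coprime to $p$, and one checks case by case that the image of $(C_l)^d$ cannot sit inside such a group unless $K\ge C$ and $G$ was already $p$-quasi-elementary, or $K=1$.

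The splitting step also has a gap. Lemma \ref{lem:SESsplit} needs a normal subgroup of $H$ of order coprime to $l$ acting on $W$ with no non-zero fixed points. When $l\nmid\#H$ you should instead just quote Schur--Zassenhaus; when $l\mid\#H$ and the cyclic part $C$ is an $l$-group or trivial, there is no candidate subgroup at all, and the paper needs a separate argument (the Frattini quotient of the normal Sylow $l$-subgroup of $G$ must be cyclic, forcing $G$ to be $p$-quasi-elementary, unless that Frattini subgroup is trivial and the extension splits). Moreover a non-trivial fixed subspace $V=W^K$ is not immediately "incompatible" with the quasi-elementary condition: the analysis shows it forces $V=W$ and $G$ $p$-quasi-elementary, which is only a contradiction because you have placed yourself in the non-primordial case; that needs to be said. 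Finally, you correctly identify the reducible analysis producing case (4) as the crux, but "should force" carries all the weight there; the working argument is that any proper $G$-normal $V<W$ yields a $p$-quasi-elementary quotient whose Sylow $l$-subgroup must be cyclic, which simultaneously rules out $l\mid\#C$ and summands of dimension greater than one, and then one repeats the argument after quotienting by a single $C_l$ factor to pin down $W\cong C_l\times C_l$ and $H=K_1\times K_2$.
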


\begin{proof}
	The first two parts follow from taking trivial extensions of $p$-groups and quasi-elementary groups respectively in Lemma \ref{lem:SES}. Since by assumption $p$ must divide the order of $G$ trivial extensions of cyclic groups of coprime to $p$ are not included in the list. Note that for $G$ as in Lemma \ref{lem:SES} to be soluble, and not $p$-quasi-elementary, is equivalent to taking $S=C_l$. By Lemma \ref{lem:SES}, $G$ is therefore an extension of the form:
	$$1\rar W:=(C_l)^d \rar G \rar H:=C\rtimes Q \rar 1 ,$$
	where $H$ is quasi-elementary with cyclic part coprime to $p$. We wish to show that under our assumptions $G$ is a split extension or lies in case (1) or (2) of the theorem. If $l\nmid \#H$ then by Schur-Zassenhaus the result follows. Suppose that that $l\mid \#H$, we split into the following cases $C$ is trivial, $Q$ is trivial, and neither $C$ nor $Q$ is trivial. 
	\begin{enumerate}[(i)]
		\item \textbf{$C$ is trivial}.
		In this case $G$ is a $q$-group.
		\item \textbf{$Q$ is trivial}. Either $G$ is an $l$ group or $C$ admits a subgroup  $C_{l'}$ of order coprime to $l$. Either $W^{C_{l'}}$ is trivial, in which case Lemma \ref{lem:SESsplit} with $K=C_{l'}$ allows us to conclude $G$ is split or $\{e\}\ne V:=W^{C_{l'}} \triangleleft G$. Now $G/V$ must be quasi-elementary and $(W/V)^{C_{l'}}=\{e\}$. Since this quotient must be $l$-quasi-elementary, in fact $W=V$ and  $G$ is therefore, $l$-quasi-elementary with cyclic part coprime to $p$.  We must have $l=p$ as $p\mid \#G$ by assumption.
		\item \textbf{Both $Q$ and $C$ are non trivial}. If $C$ is an $l$ group then let $L$ denote the $l$-sylow subgroup of $G$, clearly $L\triangleleft G$. Let $\Phi(L)$ be the Frattini subgroup of $L$, if it is trivial then $L=C_l^n$ and $G=C_l^n\rtimes Q$ and so $G$ is a split extension as claimed.
		Otherwise $\Phi(L)\triangleleft G$ and $G/\Phi(L)$ must be $q$-quasi-elementary with cyclic part prime to $p$ as $\Prim_{\K_{\F_p,ss}}(G)$ is non-trivial. Thus $L/\Phi(L)$ must be cyclic and we see that $l\ne p$. As $L/\Phi(L)$ is cyclic,  $L$ is also cyclic. It follows that in this case that $G$ is $p$-quasi-elementary.
		\par 
		If $C$ is not an $l$-group, then let $K=C_{l'}$ as before, if $W^K$ is trivial then the extension is split by Lemma \ref{lem:SESsplit}, if not then let $V=W^K$, the quotient $G/V$ must be $q$-quasi-elementary with cyclic part coprime to $p$. \\
		If $l=p$, this forces $q=p$ and $W^K=W$ so $G$ is $p$-quasi-elementary. 
		\\
		In the remaining case $l\ne p$, the quotient $G/V$ must be quasi-elementary with cyclic part prime to $p$. The image of $K$ in the quotient must act fixed point freely on $W/V$ that is $(W/V)^K$ is trivial. Thus either $W/V$ is non trivial and $G/V$ is $l$-quasi-elementary of order coprime to $p$, and thus so is $G$ a contradiction, or $V=W$ and $G/K$, and hence $G$, must be $p$-quasi-elementary.
	\end{enumerate}
	It remains to consider the split sequence $$1\rar (C_l)^d \rar G \rar H \rar 1 .$$
	\par 
	We subdivide into three cases; in the first $l=p$, and $H$ is $p$-quasi-elementary with $q\ne p$, in the second $l\ne p$ and $H$ is $p$-quasi-elementary, and finally  $l=p$ and $H$ is $p$-quasi-elementary. In all cases the cyclic part of $H$ is coprime in order to $p$.
	\begin{enumerate}[(a)]
		\item {\bf In the first case} $G=(C_p)^d\rtimes(C\rtimes Q)$ where $Q$ is a $q$-group and $p\nmid\#C$. 
		{\bf \emph{ Faithfulness:}} Suppose 
		$C\rtimes Q$ acts with kernel $K\ne\{e\}$.  Then the quotient $G/K$ must be $q$-quasi-elementary with cyclic part prime to $p$, but $p\ne q$ so the image of $W=(C_p)^d$ must lay in the cyclic part of the quotient, a contradiction.
		{\bf \emph{Irreducibility:}} Suppose that the action were reducible so there exists $V=(C_p)^{d_1}\triangleleft G$ with $d_1<d$. The corresponding quotient must be $q$-quasi-elementary of order coprime to $p$ a contradiction. We conclude that the action is faithful and irreducible.
		\item {\bf In the second case} $G=(C_l)^d\rtimes(C\rtimes P)$ where $l\ne p$ and $P$ is a $p$-group. {\bf \emph{Faithfulness:}} Suppose that $C\rtimes P$ acts with kernel  $K\ne\{e\}$ then the quotient $G/K$ must be $p$-quasi-elementary. In particular, this forces $d=1$ and  since the image of $C$ and $W$ must both be in the cyclic part of the quotient we have $K\ge C$ so $G$ was $p$-quasi-elementary. {\bf \emph{ Irreducibility:}} Assuming that the action is faithful, either it is irreducible in which case we find ourselves in case 3 of the theorem or it is reducible. If the action on  $W:=C_l^d$  were reducible, then there exists $V <W$ a normal subgroup of $G$, the quotient group $G/V$ must then be $p$-quasi-elementary. By assumption $l\ne p$ so the $l$-Sylow of $G/V$ must be cyclic, if  $G=(C_l)^d\rtimes(C\rtimes P)$  with $l\mid \#C$  then this would be impossible. We conclude that if the action is reducible then  $G=((C_l)\times V)\rtimes(C\rtimes P)$  with $l \nmid \#(C\rtimes P)$ with semisimple action, quotienting by $C_l$ shows via an identical argument that $V\cong C_l$ and so we are in part (4) of the theorem. 
		\item {\bf Finally}  $G=(C_p)^d\rtimes(C\rtimes P)$. {\bf \emph{ Faithfulness:}} We claim either this group is $p$-quasi-elementary or the action is faithful. If the action had a kernel $K$ the quotient by the kernel must be $p$-quasi-elementary, and so $C\le K$ and $G$ was already $p$-quasi-elementary. {\bf \emph{Irreducibility:}} If the action is faithful then we claim that it must be irreducible. Assume otherwise, then there exists  $V=(C_p)^{d_1}\triangleleft G$ such that $G/V$ is $p$-quasi-elementary. This forces $C$ to act trivially on $(C_p)^d/V$ and so $({(C_p)^d})^C\ne 0$ as $C$ has order coprime to $p$ and thus the action is semisimple. By assumption $G/{({(C_p)^d})^C}$ is $p$-quasi-elementary, now we may assume that the complement of ${({(C_p)^d})^C}$ in $(C_p)^d$ is non-trivial (else $C$ is normal and $G$ quasi-elementary) and thus the quotient is not $p$-quasi-elementary. Thus either the action is faithful and irreducible or $G$ is quasi-elementary.
	\end{enumerate}
\end{proof}
\begin{theorem}\label{thm:noprimss}
	Let $G$ be a $p$-quasi-elementary group which is not cyclic of order $p$, then $\Prim_{\K_{\F_p,ss}}(G)$ is trivial.
\end{theorem}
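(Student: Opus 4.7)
My plan is to prove $\Prim_{\K_{\F_p,ss}}(G) = 0$ by exhibiting a family of ``chain'' imprimitive relations and closing with a linear independence argument inside $G_0(\F_p[G])$.

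If $p \nmid \#G$ then $G$ is cyclic of order coprime to $p$ and $\K_{\F_p,ss}(G)=0$ by Corollary~\ref{cor:injcyclic}, so assume $p \mid \#G$ and write $G = C \rtimes P$ with $C$ cyclic of order coprime to $p$ and $P$ a non-trivial $p$-group. The building block I would establish first is: for any $K \leq G$ and any $M \triangleleft K$ with $[K:M]=p$, the element $p[K]-[M]$ lies in $\K_{\F_p,ss}(K)$. Indeed the image of $C \cap K$ in $K/M \cong C_p$ has order dividing both $p$ and $\#(C \cap K)$, hence is trivial, so $C\cap K \leq M$; every $p$-regular element of $K$ is therefore in $M$, acts trivially on $K/M$, and the Brauer character of $\F_p[K/M]$ equals $p$ times that of the trivial character. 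The relation $p[K] - [M]$ is imprimitive in $G$: when $K<G$ it is induced from $K$, and when $K=G$ it is inflated from $G/M \cong C_p$, with the hypothesis $G \neq C_p$ precisely guaranteeing that $M \neq 1$.

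I would then build chain reductions from these atomic pieces. Any $H \leq G$ is, up to $G$-conjugacy, of the form $H = LQ$ with $L = C \cap H$ and $Q \leq P$ (by Schur--Zassenhaus). Since $P$ is a $p$-group, one can refine to a normal chain $Q = Q_0 \triangleleft Q_1 \triangleleft \cdots \triangleleft Q_k = P$ with each $[Q_{i+1}:Q_i]=p$; because $L \triangleleft G$, the chain $LQ_0 \triangleleft LQ_1 \triangleleft \cdots \triangleleft LP$ in $G$ again has each step of index $p$ (the normality $LQ_i \triangleleft LQ_{i+1}$ follows from a direct computation using that $L$ is normalised by $LQ_{i+1}$ and $Q_i$ by $Q_{i+1}$). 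Telescoping the atomic relations yields
\[
[H] \;\equiv\; [P:Q] \cdot [LP] \pmod{\Imprim(G)}.
\]
Applied to every term of an arbitrary $r = \sum_H a_H [H] \in \K_{\F_p,ss}(G)$ and regrouping by $L$, one obtains $r \equiv \sum_{L \leq C} b_L\, [LP] \pmod{\Imprim(G)}$ for some integers $b_L$.

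To close the argument I would identify the Brauer character of $m_G([LP])$: using the identification $G/LP \cong C/L$ as $G$-sets with $C$ acting by translation, the value at $h \in C$ is $\#(C/L)$ if $h \in L$ and zero otherwise. The functions $\{\#(C/L)\cdot 1_L\}_{L \leq C}$ are $\mathbb{Z}$-linearly independent: a hypothetical relation evaluated at a generator of $C$ yields $b_C = 0$, and iterating with generators of successively smaller subgroups forces every $b_L = 0$ by downward induction on the subgroup lattice of $C$. Since $\sum_L b_L [LP]$ differs from $r$ by an imprimitive and hence also lies in $\K_{\F_p,ss}(G)$, we obtain $\sum_L b_L\, m_G([LP]) = 0$ and therefore $b_L = 0$ for all $L$, so $r \in \Imprim(G)$. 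The main technical hurdles are the normality check $LQ_i \triangleleft LQ_{i+1}$ along the chain and the case $K = G$ of the building-block relation, which is exactly where the assumption $G \neq C_p$ is used.
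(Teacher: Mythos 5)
Your proof is correct, and it rests on the same family of imprimitive relations as the paper's: the chain relations $p[K]-[M]$ for $M\triangleleft K$ of index $p$, which are inductions of inflations of the basic relation $p[C_p]-[\{e\}]$ of Lemma~\ref{lem:Cprel}. Where the two arguments part ways is in how they show these relations exhaust $\K_{\F_p,ss}(G)$. The paper first quotes the rank of $\K_{\F_p,ss}(G)$ (the number of conjugacy classes of subgroups that are not cyclic of order coprime to $p$, coming from Lemma~\ref{lem:copriss}), assembles the chain relations into an explicit full-rank sublattice $\mathcal{L}$, and then checks by a direct coefficient computation that $\mathcal{L}$ is saturated. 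You instead telescope the chain relations to put an arbitrary relation into the normal form $\sum_L b_L[LP]$ modulo $\Imprim(G)$, and then show that the permutation modules $\mathbb{F}_p[G/LP]$ have $\mathbb{Z}$-linearly independent Brauer characters (supported on $C$, since every $p$-regular element of $G$ lies in $C$), forcing all $b_L=0$. Your closing step is arguably cleaner: it needs no prior rank count and it exhibits explicitly a set of subgroups whose permutation modules form a basis of $\Im(m_{\mathbb{F}_p,ss})(G)$, whereas the paper's saturation computation is more elementary bookkeeping. Two small points you should make explicit: (i) that every $p$-regular element of $K$ lies in $C\cap K$ (because $K/(C\cap K)$ embeds in the $p$-group $G/C$), which is what the word ``therefore'' in your building-block step silently uses; and (ii) that distinct $L\le C$ give non-conjugate subgroups $LP$ (clear, since $LP\cap C=L$ and $C$ is normal), so the classes $[LP]$ really are distinct basis vectors of $b(G)$. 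Neither is a gap, just a line each.
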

\begin{proof}
	The rank of the space of relations of $G$ is the number of conjugacy classes of subgroups of $G$ which are not cyclic of order coprime to $p$. We will construct a sublattice of imprimitive relations which has full rank, then proceed to show it is saturated.
	\\
	Let $G=C_m\rtimes P$ with $P$ a fixed Sylow $p$-subgroup of $G$ and $p\nmid m$. Subgroups of $G$ which are not cyclic of order coprime to $p$ are determined up to conjugacy by their intersection with $C_m$, which is characteristic and hence normal in $G$, and their intersection with $P$ which is non-trivial by assumption.
	We fix a labelling on the subgroup lattice of $P$ up to $G$-conjugacy let $P_{i,j}$ be the $j$th subgroup of size $p^i$. Subgroups which are not cyclic of order coprime to $p$ are then characterised up to conjugacy as $C_s\rtimes P_{i,j}$ where $s\mid m$ and $i\ne0$. Each such subgroup admits an imprimitive relation inflated from any quotient $C_p$ namely $p[C_s\rtimes P_{i,j}]-[C_s \rtimes P_{i-1,k}]$ where $ P_{i,j} > P_{i-1,k}$. Note that as every maximal subgroup of a $p$-group has index $p$ we may use these relations to create the relation  $[C_s\rtimes P_{i,j}]-[C_s \rtimes P_{i,k}]$ for any $P_{i,k}$. \\
	{\bf \emph{The sublattice:}}  We now exhibit a full rank sublattice of imprimitive relations. We form the span of  $p[C_s\rtimes P_{i,0}]-[C_s \rtimes P_{i-1,0}]$ as we range over $s\mid m$ and $i>1$ along with relations $[C_s\rtimes P_{i,0}]-[C_s \rtimes P_{i,j}]$ for $s\mid m,$ and  $i,j>0$. Clearly this set is linearly independent and of the correct size, so we have a full rank sublattice;
	\begin{align*}
	\mathcal{L}=\langle p[C_s\rtimes P_{i,0}]-[C_s \rtimes P_{i-1,0}] , [C_s\rtimes P_{i,0}]-[C_s \rtimes P_{i,j}] |i\in I, j \in J\rangle_\mathbb{Z}.
	\end{align*}
	{\bf \emph{Saturation:}} The sublattice $\mathcal{L}$ is in fact, saturated, suppose that there exists a relation $\theta$ such that $n\theta=\sum_{s \mid m} (\sum_{i>0} a_{s,i}( p[C_s\rtimes P_{i,0}]-[C_s \rtimes P_{i-1,0}])+\sum_{j>0}b_{s,i,j}([C_s\rtimes P_{i,0}]-[C_s \rtimes P_{i,j}]))$ for a relation $\theta$ we seek to show that $n\mid a_{s,i},b_{s,i,j}$ for all $s,i,j$ in the indexing sets. Since the coefficient of $[C_s \rtimes P_{i,j}]$ on the right hand side is $b_{s,i,j}$ we may conclude that $n\mid b_{s,i,j}$ , subtracting all terms in the second sum from both sides we then have $n\theta'=\sum_{s \mid m}\sum_{i>0} a_{s,i}( p[C_s\rtimes P_{i,0}]-[C_s \rtimes P_{i-1,0}])$, where the coefficient of $[C_s \rtimes P_{0,0}]$ on the right hand side
	is $-a_{s,1}$ so that $n\mid a_{s,1}$. Now the coefficient of $[C_s \rtimes P_{i,0}]$ is $pa_{s,i}-a_{s,i+1}$ and so if $n\mid a_{s,i}$ then $n\mid a_{s,i+1}$ and by induction $n\mid a_{s,i}$ for all $s,i$ in the indexing set.\\
	Thus we have a full rank saturated sublattice  of imprimitive relations, it follows that every relation is imprimitive.
\end{proof}

Theorems \ref{thm:sol} and \ref{thm:noprimss} allow us to conclude the following.
\begin{theorem}\label{thm:finalsol}
	Let $G$ be a finite soluble group, $p$ a prime, $\Prim_{\K_{\F_p,ss}}(G)$ is non-trivial if and only if:
	\begin{enumerate}
		\item  $G\cong C_p$, then $\Prim_{\K_{\F_p,ss}}(G)\cong \mathbb{Z}$ or,
		\item $G\cong (C_l)^d\rtimes H$ with $H$ $q$-quasi-elementary with cyclic part coprime to $p$ acting faithfully and irreducibly on $(C_l)^d$, then $\Prim_{\K_{\F_p,ss}}(G)\cong \mathbb{Z}/{q\mathbb{Z}}$ or $\mathbb{Z}$ if $H$ is cyclic of order coprime to $p$ or,
		\item for $l\ne p$ a prime $G=(C_l\rtimes C_{q^r})\times (C_l\rtimes C_{q^s})$, $\Prim_{\K_{\F_p,ss}}(G)\cong \mathbb{Z}/{q\mathbb{Z}}$ or,
		\item$G$ is quasi-elementary of order coprime to $p$ with $\Prim_{\K_{\F_p,ss}}(G)$ as over $\mathbb{Q}$.
	\end{enumerate} 
\end{theorem}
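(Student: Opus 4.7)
The argument assembles the preceding results and splits according to whether $p$ divides $\#G$. When $p\nmid \#G$, Remark \ref{rem:inc} gives $\K_{\F_p,ss}(G)=\ker(m_\bQ)(G)$, so $\Prim_{\K_{\F_p,ss}}(G)$ coincides with its characteristic-zero analogue and the result can be read off from the soluble Brauer-relation classification of \cite{BD}; this output matches case (4), case (3) with $q\ne p$, and the instances of case (2) in which the ambient group has order coprime to $p$. When $p\mid \#G$, Theorem \ref{thm:sol} confines $G$ to four families, and Theorem \ref{thm:noprimss} eliminates every $p$-quasi-elementary group except $C_p$; this collapses the first two families of Theorem \ref{thm:sol} down to $C_p$ alone (matching case (1) of the present theorem), Theorem \ref{thm:sol}(4) is precisely the sub-case $q=p$ of case (3) here, and Theorem \ref{thm:sol}(3) supplies case (2).

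For the reverse direction I would compute $\Prim_{\K_{\F_p,ss}}$ for each surviving family via Lemma \ref{lem:SES}. For $G=C_p$, Lemma \ref{lem:Cprel} provides the generator $p[C_p]-[\{e\}]$, giving $\Prim_{\K_{\F_p,ss}}(G)\cong\bZ$. For $G=(C_l)^d\rtimes H$, the faithful and irreducible action forces $(C_l)^d$ to be the unique minimal normal subgroup, so every proper quotient of $G$ is a quotient of $H$; Lemma \ref{lem:SES} then returns $\bZ$ when $H$ is cyclic of order coprime to $p$ (all proper quotients of $G$ are then cyclic of order coprime to $p$) and $\bZ/q\bZ$ when $H$ is non-cyclic $q$-quasi-elementary (at least one proper quotient is non-cyclic $q$-quasi-elementary, and every proper quotient is $q$-quasi-elementary with cyclic part coprime to $p$). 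The product group in case (3) is treated by an analogous quotient analysis combined with Lemma \ref{lem:SES}, and case (4) is inherited verbatim from the characteristic-zero picture via Remark \ref{rem:inc}.

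The main obstacle is verifying that the groups in cases (2) and (3) are not primordial for $\Im(m_{\F_p,ss})$ in the various edge cases, so that Lemma \ref{lem:SES} genuinely applies. I would lean on Lemma \ref{lem:primordials}: the faithfulness and irreducibility of the action in case (2) pin $(C_l)^d$ inside $G$ in a way that obstructs the required quasi-elementary shape with cyclic part coprime to $p$, and the non-cyclic normal structure of the product in case (3) likewise rules out primordiality. Once non-primordiality is secured, deciding which branch of Lemma \ref{lem:SES} to apply becomes a direct inspection of the proper quotient lattice of $G$.
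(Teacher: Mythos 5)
Your proposal is correct and follows essentially the same route as the paper, which itself offers no written argument beyond the single line that Theorems \ref{thm:sol} and \ref{thm:noprimss} (together with Lemma \ref{lem:SES} and the characteristic-zero classification via Remark \ref{rem:inc}) allow one to conclude the result. Your version is in fact more explicit than the paper's; the only slight imprecision is in case (2), where the branch yielding $\mathbb{Z}/q\mathbb{Z}$ also includes $H$ cyclic of order divisible by $p$ (with $q=p$), not only $H$ non-cyclic, but this does not affect the dichotomy stated in the theorem.
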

Combining this with Corollary \ref{cor:nonsol} gives Theorem \ref{thm:main}.
\section{Some Explicit Relations}
We now establish generators of $\Prim_{\K_{\F_p,ss}}(G)$ for the soluble groups of order divisible by $p$
admitting primitive relations. Since the inclusion in Remark \ref{rem:inc} becomes an equality when $p\nmid \#G$, this combined with the classification in \cite[Theorem A]{BD} completely determines all Brauer relations for soluble groups over $G_0(\mathbb{F}_p[G])$. 
\begin{lemma}
	The group $\Prim_{\K_{\F_p,ss}}(C_p)$ is generated by $p[C_p]-[\{e\}]$. The group $\Prim_{\K_{\F_p,ss}}((C_l)^d\rtimes H)$ with $H$ quasi-elementary acting faithfully is generated by the same relation as over $\mathbb{Q}$  for $d>1$.
\end{lemma}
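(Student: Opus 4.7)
The proof decomposes into the two claims, and both rely on the characterisation of $\Prim_{\K_{\F_p,ss}}(G)$ furnished by Lemma \ref{lem:SES}. For the first, I would observe that the only proper subgroup of $C_p$ is trivial and the only proper non-trivial normal subgroup is $C_p$ itself with trivial quotient, so both summands defining $\Imprim(C_p)$ vanish. Hence $\Prim_{\K_{\F_p,ss}}(C_p) = \K_{\F_p,ss}(C_p)$, and Lemma \ref{lem:Cprel} supplies the generator $p[C_p] - [\{e\}]$.

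For the second, take $G = (C_l)^d \rtimes H$ with $H$ quasi-elementary, the action faithful, and $d > 1$. The plan is to exhibit a primitive Brauer relation $\Theta \in \ker(m_\mathbb{Q}(G))$ of the shape $[G] + \sum_{H' < G} n_{H'} [H']$ and then view it as an element of $\K_{\F_p,ss}(G)$ via the inclusion of Remark \ref{rem:inc}. The existence of such $\Theta$, together with the normalisation so that the coefficient of $[G]$ equals $1$, is furnished by the explicit classification of primitive Brauer relations in characteristic zero in \cite[Theorem A]{BD} for groups of this shape. Lemma \ref{lem:SES} then asserts that any Brauer relation of this form generates $\Prim_{\K_{\F_p,ss}}(G)$, so $\Theta$ itself serves as the required generator.

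The only non-trivial point is reconciling the cyclic structure of $\Prim_{\K_{\F_p,ss}}(G)$ predicted by Theorem \ref{thm:finalsol}(2) with the claim that the $\mathbb{Q}$-relation maps to a generator. When $H$ is cyclic of order coprime to $p$, all proper quotients of $G$ are cyclic of order coprime to $p$, Lemma \ref{lem:SES}(1) gives $\Prim_{\K_{\F_p,ss}}(G) \cong \mathbb{Z}$, and $\Theta$ maps to a generator. When $H$ is $q$-quasi-elementary and some proper quotient fails to be cyclic coprime to $p$, one is in case (2) of Lemma \ref{lem:SES}, $\Prim_{\K_{\F_p,ss}}(G) \cong \mathbb{Z}/q\mathbb{Z}$, and a relation with $[G]$-coefficient $1$ again hits a generator. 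The main obstacle is purely bookkeeping: confirming that the \cite{BD} generator has $[G]$-coefficient $1$ in the $d > 1$ case, which is built into that explicit construction and requires no further argument here.
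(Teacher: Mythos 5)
Your proposal is correct and follows essentially the same route as the paper: the $C_p$ case reduces to Lemma \ref{lem:Cprel} (after noting $\Imprim(C_p)=0$), and for $d>1$ both arguments take the characteristic-zero relation from \cite{BD} with $[G]$-coefficient $1$, transport it via Remark \ref{rem:inc}, and invoke Lemma \ref{lem:SES}. The only differences are cosmetic: you cite Theorem A of \cite{BD} where the paper points to the explicit construction in Proposition 6.4, and you spell out the $\mathbb{Z}$ versus $\mathbb{Z}/q\mathbb{Z}$ bookkeeping that the final assertion of Lemma \ref{lem:SES} already subsumes.
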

\begin{proof}
	The first claim is identical to Lemma \ref{lem:Cprel}. For the second, note that the Brauer relation over $\mathbb{Q}$ for $d\ge 2$ given in \cite[Proposition 6.4]{BD}  has coefficient of $[G]$ equal to $1$ and, as its still a relation in this setting (see Remark \ref{rem:inc}), it must generate $\Prim_{\K_{\F_p,ss}}(G)$ by Lemma \ref{lem:SES}, this gives the second statement.
\end{proof}
Note that a quasi-elementary group acting faithfully on a cyclic group of prime order must be cyclic so the only remaining case (corresponding to $d=1$) is the case of a coprime to $p$ cyclic group acting faithfully on a cyclic group of order $p$.
\begin{lemma}\label{lem:1dim}
	Let $G=C_p\rtimes C_{mq^r}$ with faithful action then $\Prim_{\K_{\F_p,ss}}(G)$ is generated by the same relation as presented in \cite[Proposition 6.5]{BD} over $\mathbb{Q}$ unless $ m=1$ in which case it is generated by the following relation $-[C_{q^r}]+(p-1)/q^r[C_p]+[C_p\rtimes C_q^r]$.
\end{lemma}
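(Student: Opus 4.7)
The plan is to reduce both cases to Lemma \ref{lem:SES}: since that lemma forces $\Prim_{\K_{\F_p,ss}}(G)$ to be cyclic, it suffices to exhibit a single Brauer relation whose $[G]$-coefficient is $1$. For $G=C_p\rtimes C_{mq^r}$ with faithful action, any normal subgroup of $G$ not containing $C_p$ must meet $C_p$ trivially, and faithfulness of the $C_{mq^r}$-action on $C_p$ then forces it to be trivial; hence the normal subgroups of $G$ are exactly $\{e\}$, $C_p$, and $C_p\rtimes K$ for $K\le C_{mq^r}$, and every proper quotient of $G$ is cyclic of order coprime to $p$. Case (1) of Lemma \ref{lem:SES} then gives $\Prim_{\K_{\F_p,ss}}(G)\cong \mathbb{Z}$, generated by any Brauer relation of the form $[G]+\sum_{H<G}n_H[H]$. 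For $m>1$ the relation of \cite[Proposition 6.5]{BD} is a Brauer relation over $\mathbb{Q}$ whose $[G]$-coefficient is $1$, and by Remark \ref{rem:inc} it is also a Brauer relation over $\mathbb{F}_p,ss$, which handles this case.

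For $m=1$ it remains to check that $\theta:=-[C_{q^r}]+((p-1)/q^r)[C_p]+[G]$ lies in $\K_{\F_p,ss}(G)$. Since $C_p$ is the normal Sylow $p$-subgroup of $G$ with $p'$-complement $C_{q^r}$ and $q^r\mid p-1$, every simple $\mathbb{F}_p[G]$-module has $C_p$ acting trivially, so the simples are the $q^r$ linear characters of $C_{q^r}$ inflated to $G$ (all defined over $\mathbb{F}_p$, since $\mathbb{F}_p$ already contains the required $q^r$-th roots of unity). I would then work with Brauer characters: the value of $\mathbb{F}_p[G/H]$ at a $p$-regular element $g$ is the number of $H$-cosets fixed by $g$. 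The $p$-regular conjugacy classes of $G$ are $\{e\}$ together with the classes of the non-identity elements $g^i\in C_{q^r}$; direct counting gives the values $(q^r,0)$ for $\mathbb{F}_p[G/C_p]$ (as $g^i\notin C_p$ when $i\ne 0$ and $C_p$ is normal), $(1,1)$ for $\mathbb{F}_p[G/G]$, and $(p,1)$ for $\mathbb{F}_p[G/C_{q^r}]$. The Brauer character of the image of $\theta$ under $m_{\mathbb{F}_p,ss}$ is then $-p+((p-1)/q^r)q^r+1=0$ at $e$ and $-1+0+1=0$ at each $g^i$, so $\theta\in\K_{\F_p,ss}(G)$; as its $[G]$-coefficient is $1$, it generates $\Prim_{\K_{\F_p,ss}}(G)$ by the first paragraph.

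The only step requiring genuine care is the Brauer character value of $\mathbb{F}_p[G/C_{q^r}]$ at $g^i\ne e$. Here I would use faithfulness of the $C_{q^r}$-action to conclude that no non-identity element of $C_p$ centralises $g^i$ (the commutator condition $c^{k(a^i-1)}=e$ forces $k=0$), hence $C_G(g^i)=C_{q^r}$; the $G$-conjugacy class of $g^i$ is then $g^i C_p$, which meets $C_{q^r}$ only in $g^i$ itself, producing exactly one fixed coset and giving the claimed value~$1$.
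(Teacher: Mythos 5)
Your proof is correct, and the two halves should be assessed separately. The reduction via Lemma \ref{lem:SES} (all proper quotients of $C_p\rtimes C_{mq^r}$ are cyclic of order coprime to $p$, so $\Prim_{\K_{\F_p,ss}}(G)\cong\mathbb{Z}$ is generated by any relation with $[G]$-coefficient $1$) and the treatment of $m>1$ via Remark \ref{rem:inc} coincide with the paper. Where you genuinely diverge is the $m=1$ case: you verify directly that $-[C_{q^r}]+\frac{p-1}{q^r}[C_p]+[G]$ lies in $\K_{\F_p,ss}(G)$ by computing Brauer characters of the three permutation modules on the $q^r$ $p$-regular classes ($e$ and the cosets $C_pg^i$), using that the Brauer character determines the class in $G_0(\mathbb{F}_p[G])$; your fixed-point counts $(q^r,0)$, $(1,1)$, $(p,1)$ and the centraliser computation $C_G(g^i)=C_{q^r}$ are all correct, as is the integrality $q^r\mid p-1$ coming from faithfulness. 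The paper instead constructs the relation by induction on $r$ as an explicit integral combination of the relation $p[C_p]-[e]$ induced from $C_p$ and the characteristic-zero relation $[C_{q^{r-1}}]-q[C_{q^r}]-[C_p\rtimes C_{q^{r-1}}]+q[C_p\rtimes C_{q^r}]$; this combination yields $q$ times the claimed element, and one then divides by $q$ using that $\K_{\F_p,ss}(G)$ is saturated in $b(G)$ (being the kernel of a map into the torsion-free group $G_0(\mathbb{F}_p[G])$). Your route is more self-contained and avoids that saturation step entirely; the paper's route has the merit of exhibiting the new relation explicitly in terms of previously known relations, in keeping with its constructive aims. Either argument suffices for the lemma.
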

\begin{proof}
	We explicitly construct such a relation and since the coefficient of $[G]$ is $1$ it must generate $\Prim_{\K_{\F_p,ss}}(G)$. In the case $m$ is non-trivial we simply apply Remark \ref{rem:inc} and use the Brauer relation over  $\mathbb{Q}$ in \cite[Proposition 6.5]{BD}. Otherwise $G= C_p\rtimes C_{q^r}$, and we have the following relation: $$p[C_p]-[e],$$ induced from the subgroup $C_p$, and the primitive relation over $\mathbb{Q}$, $$[C_{q^{r-1}}]-q[C_{q^r}]-[C_p\rtimes C_{q^{r-1}}]+q[C_p\rtimes C_{q^r}].$$ 
	Using a linear combination of the two identified relations we can produce a third which is a multiple of the relation in the statement. We proceed by induction on $r$. 
	\par 
	
	If $r=1$ then taking a linear combination: $$ \alpha ([\{e\}]-q[C_{q}]-[C_p]+q[C_p\rtimes C_{q}])-\beta ((p[C_p]-[e])),$$ 
	and setting $\alpha=1, \beta=-1$ gives  $q$ times the relation: $$-[C_q]+(p-1)/q[C_p]+[C_p\rtimes C_q].$$\\
	If $r> 1$, assume that for $s<r$ the group $C_p\rtimes C_{mq^s}$ admits the relation: $$-[C_{q^s}]+(p-1)/q^s[C_p]+[C_p\rtimes C_q^s].$$ Then the $\mathbb{Q}$ relation for $G$; $$[C_{q^{r-1}}]-q[C_{q^r}]-[C_p\rtimes C_{q^{r-1}}]+q[C_p\rtimes C_{q^r}],$$ plus  $$-[C_{q^{r-1}}]+(p-1)/q^{r-1}[C_p]+[C_p\rtimes C_q^{r-1}],$$ the induced relation from $C_p \rtimes C_{q^{r-1}}$, gives $q$ times the claimed relation. 
\end{proof}

\newpage
\bibliographystyle{abbrv}

\bibliography{newbib}

\end{document}